\title[On Hall-Littlewood polynomials]{Hall-Littlewood Polynomials, Alcove Walks, and Fillings of Young Diagrams}
\author{Cristian Lenart}
\address{Department of Mathematics and Statistics, State University of New York at Albany, Albany, NY 12222, USA; Phone +1-518-442-4635, Fax +1-518-442-4731}
\email{lenart@albany.edu}
\keywords{Hall-Littlewood polynomials, Macdonald polynomials, alcove walks, Schwer's formula, the Haglund-Haiman-Loehr formula, fillings of Young diagrams.}
\subjclass[2000]{Primary 05E05. Secondary 33D52.}
\thanks{Cristian Lenart was partially supported by the National Science Foundation grant  DMS-0701044}
\DeclareMathOperator{\des}{des}
\DeclareMathOperator{\inv}{inv}
\DeclareMathOperator{\cinv}{cinv}
\DeclareMathOperator{\rt}{r}
\DeclareMathOperator{\rev}{rev}
\DeclareMathOperator{\ct}{ct}
\newlength{\cellsize}
\newcommand\tableau[1]{
\vcenter{
\let\\=\cr
\baselineskip=-16000pt
\lineskiplimit=16000pt
\lineskip=0pt
\halign{&\tableaucell{##}\cr#1\crcr}}}
\newcommand{\tableaucell}[1]{{%
\def \arg{#1}\def \void{}%
\ifx \void \arg
\vbox to \cellsize{\vfil \hrule width \cellsize height 0pt}%
\else
\unitlength=\cellsize
\begin{picture}(1,1)
\put(0,0){\makebox(1,1){$#1$}}
\put(0,0){\line(1,0){1}}
\put(0,1){\line(1,0){1}}
\put(0,0){\line(0,1){1}}
\put(1,0){\line(0,1){1}}
\end{picture}%
\fi}}
\numberwithin{equation}{section}
\theoremstyle{plain}
\newtheorem{theorem}{Theorem}[section]
\newtheorem{proposition}[theorem]{Proposition}
\newtheorem{lemma}[theorem]{Lemma}
\newtheorem{definition}[theorem]{Definition}
\newtheorem{example}[theorem]{Example}
\theoremstyle{remark}
\newtheorem{remark}[theorem]{Remark}
\def\R{\mathbb{R}}
\def\Z{\mathbb{Z}}
\def\F{{\mathcal F}}
\def\Fh{ {\overline{\mathcal{F} } } }
\def\A{\mathcal{A}}
\def\Waff{W_{\mathrm{aff}}}
\def\h{\mathfrak{h}}
\def\hR{\mathfrak{h}^*_\mathbb{R}}
\newcommand{\stacksum}[2]{\sum_{\begin{array}{c}\vspace{-5.4mm}\;\\ \vspace{-1mm}\scriptstyle{#1}\\ \scriptstyle{#2}\end{array}} }
\begin{document}
\bibliographystyle{plain}

\begin{abstract} 
A recent breakthrough in the theory of (type $A$) Macdonald polynomials is due to Haglund, Haiman and Loehr, who exhibited a combinatorial formula for these polynomials in terms of a pair of statistics on fillings of Young diagrams. The inversion statistic, which is the more intricate one, suffices for specializing a closely related formula to one for the type $A$ Hall-Littlewood $Q$-polynomials (spherical functions on $p$-adic groups). An apparently unrelated development, at the level of arbitrary finite root systems, led to Schwer's formula (rephrased and rederived by Ram) for the Hall-Littlewood $P$-polynomials of arbitrary type. The latter formula is in terms of so-called alcove walks, which originate in the work of Gaussent-Littelmann and of the author with Postnikov on discrete counterparts to the Littelmann path model. In this paper, we relate the above developments, by deriving a Haglund-Haiman-Loehr type formula for the Hall-Littlewood $P$-polynomials of type $A$ from Ram's version of Schwer's formula via a ``compression'' procedure. 
\end{abstract}

\maketitle


\section{Introduction}
\label{intro}

Hall-Littlewood symmetric polynomials are at the center of many recent developments in representation theory and algebraic combinatorics. They were originally defined in type $A$, as a basis for the algebra of symmetric functions depending on a parameter $t$; this basis interpolates between two fundamental bases: the one of Schur functions, at $t=0$, and the one of monomial functions, at $t=1$. The original motivation for defining Hall-Littlewood polynomials comes from some counting problems in group theory that led to the definition of the Hall algebra \cite{litcsf}. Apart from the Hall algebra, the best known applications of Hall-Littlewood functions include: 
\begin{itemize}
\item the character theory of finite linear groups \cite{grecfg}, 
\item spherical functions for the general linear group over the field of $p$-adic numbers \cite{macsfh}[Chapter V], 
\item projective and modular representations of symmetric groups \cite{morasf,schuds}, 
\item Lusztig's $t$-analog of weight multiplicities (Kostka-Foulkes polynomials) and affine Hecke algebras \cite{lusscq}, 
\item unipotent classes and Springer representations \cite{hasstc,lusgps}, 
\item statistical physics related to certain Bethe ansatz configurations and fermionic multiplicity formulas \cite{karbac},
\item representations of quantum affine algebras and affine crystals \cite{lltrth,laslqa}.
\end{itemize} 

Macdonald \cite{macsfg} showed that there is a formula for the spherical functions corresponding to a Chevalley group over a $p$-adic field which generalizes the formula for the Hall-Littlewood polynomials. Thus, the Macdonald spherical functions generalize the Hall-Littlewood polynomials to all root systems, and the two names are used interchangeably in the literature. There are two families of Hall-Littlewood polynomials of arbitrary type, called $P$-polynomials and $Q$-polynomials, which form dual bases for the Weyl group invariants. The $P$-polynomials specialize to the Weyl characters at $t=0$.  The transition matrix between Weyl characters and $P$-polynomials is given by Lusztig's $t$-analog of weight multiplicities (Kostka-Foulkes polynomials of arbitrary type), which are certain affine Kazhdan-Lusztig polynomials \cite{katsfq,lusscq}. On the combinatorial side, we have the Lascoux-Sch\"utzenberger formula for the Kostka-Foulkes polynomials in type $A$ \cite{lassuc}, but no generalization of this formula to other types is known. Other applications of the type $A$ Hall-Littlewood polynomials that extend to arbitrary type are those related to fermionic multiplicity formulas \cite{aakfpk} and affine crystals \cite{laslqa}. We refer to \cite{dlthlf,macsfh,narkfp,stekfp} for surveys on Hall-Littlewood polynomials, both of type $A$ and of arbitrary type. 

Macdonald \cite{macsft,macopa} defined a remarkable family of orthogonal polynomials depending on parameters $q,t$, which bear his name. These polynomials generalize the spherical functions for a $p$-adic group, the Jack polynomials, and the zonal polynomials. At $q=0$, Macdonald's integral form polynomials $J_\lambda(X;q,t)$ specialize to the Hall-Littlewood $Q$-polynomials, and thus they further specialize to the Weyl characters (upon setting $t=0$ as well). There has been considerable interest recently in the combinatorics of Macdonald polynomials. This stems in part from a combinatorial formula for the ones corresponding to type $A$, which is due to Haglund, Haiman, and Loehr \cite{hhlcfm}, and which is in terms of fillings of Young diagrams. This formula uses two statistics on the mentioned fillings, called inv and maj. The Haglund-Haiman-Loehr (HHL) formula already found important applications, such as new proofs of the positivity theorem for Macdonald polynomials, which states that the two-parameter Kostka-Foulkes polynomials have nonnegative integer coefficients. One of the mentioned proofs, due to Grojnowski and Haiman \cite{gahaha}, is based on Hecke algebras, while the other, due to Assaf \cite{asasem}, is purely combinatorial and leads to a positive formula for the two-parameter Kostka-Foulkes polynomials.

An apparently unrelated development, at the level of arbitrary finite root systems, led to Schwer's formula \cite{schghl}, rephrased and rederived by Ram \cite{ramawh}, for the Hall-Littlewood $P$-polynomials of arbitrary type. The latter formula is in terms of so-called alcove walks, which originate in the work of Gaussent-Littelmann \cite{gallsg} and of the author with Postnikov \cite{lapawg,lapcmc} on discrete counterparts to the Littelmann path model in the representation theory of Lie algebras \cite{litlrr,litpro}. 

In this paper, we relate Schwer's formula to the HHL formula. More precisely, we show that, if the partition $\lambda$ has $n-1$ distinct non-zero parts, then we can group into equivalence classes the terms in the type $A_{n-1}$ instance of Ram's version of Schwer's formula for $P_\lambda(X;t)$, such that the sum in each equivalence class is a term in the HHL formula for $q=0$. An equivalence class consists of all the terms corresponding to alcove walks that produce the same filling of the Young diagram $\lambda$ via a simple construction. In consequence, we explain the way in which the Macdonald polynomial inversion statistic (which is the more intricate of the two statistics mentioned above) arises naturally from more general concepts, as the outcome of ``compressing'' Ram's version of Schwer's formula in type $A$. More generally, when $\lambda$ is arbitrary, with A. Lubovsky we used a combinatorial bijection to show that (Ram's version of) Schwer's formula leads to a new HHL-type formula for $P_\lambda(X;t)$; this result is contained in the Appendix.

This article lays the groundwork for several directions of research. In \cite{lencfm} we show that the recent  formula for the Macdonald polynomials due to Ram and Yip \cite{raycfm} (which is also in terms of alcove walks, but does not specialize to Ram's version of Schwer's formula used in this paper upon setting $q=0$) compresses to a formula which is similar to the HHL one, but has fewer terms. Thus, the results in this paper are not a specialization of those in \cite{lencfm}. Furthermore, note that, unlike in the present paper, in \cite{lencfm} we only consider partitions $\lambda$ with $n-1$ distinct non-zero parts, and that certain key facts needed in \cite{lencfm} are proved in the present paper. 
In \cite{lenhhl}, we derive new tableau formulas for the Hall-Littlewood polynomials of type $B$ and $C$ by compressing the corresponding instances of (Ram's version of) Schwer's formula. Type $D$ is slightly more complex, and will be considered in a different publication. We are also  investigating potential applications to positive combinatorial formulas for Lusztig's $t$-analog of weight multiplicity beyond type $A$.

\section{Preliminaries}\label{prelim}

We recall some background information on finite root systems and affine Weyl groups.

\subsection{Root systems}\label{rootsyst}

Let $\mathfrak{g}$ be a complex semisimple Lie algebra, and $\h$ a Cartan subalgebra, whose rank is $r$.
Let $\Phi\subset \h^*$ be the 
corresponding irreducible {\it root system}, $\hR\subset \h^*$ the real span of the roots, and $\Phi^+\subset \Phi$ the set of positive roots. 
Let $\alpha_1,\ldots,\alpha_r\in\Phi^+$ be the corresponding 
{\it simple roots}.
We denote by $\langle\,\cdot\,,\,\cdot\,\rangle$ the nondegenerate scalar product on $\hR$ induced by
the Killing form.  
Given a root $\alpha$, we consider the corresponding {\it coroot\/} $\alpha^\vee := 2\alpha/\langle\alpha,\alpha\rangle$ and reflection $s_\alpha$.  

Let $W$ be the corresponding  {\it Weyl group\/}, whose Coxeter generators are denoted, as usual, by $s_i:=s_{\alpha_i}$. The length function on $W$ is denoted by $\ell(\,\cdot\,)$. The {\em Bruhat graph} on $W$ is the directed graph with edges $u\rightarrow w$ where $w=u s_{\beta}$ for some $\beta\in\Phi^+$, and $\ell(w)>\ell(u)$; we usually label such an edge by $\beta$ and write $u\stackrel{\beta}\longrightarrow w$. The {\em reverse Bruhat graph} is obtained by reversing the directed edges above. The {\em Bruhat order} on $W$ is the transitive closure of the relation corresponding to the Bruhat graph.

The {\it weight lattice\/} $\Lambda$ is given by
\begin{equation}
\Lambda:=\{\lambda\in \hR \::\: \langle\lambda,\alpha^\vee\rangle\in\Z
\textrm{ for any } \alpha\in\Phi\}.
\label{eq:weight-lattice}
\end{equation}
The weight lattice $\Lambda$ is generated by the 
{\it fundamental weights\/}
$\omega_1,\ldots,\omega_r$, which form the dual basis to the 
basis of simple coroots, i.e., $\langle\omega_i,\alpha_j^\vee\rangle=\delta_{ij}$.
The set $\Lambda^+$ of {\it dominant weights\/} is given by
$$
\Lambda^+:=\{\lambda\in\Lambda \::\: \langle\lambda,\alpha^\vee\rangle\geq 0
\textrm{ for any } \alpha\in\Phi^+\}.
$$
The subgroup of $W$ stabilizing a weight $\lambda$ is denoted by $W_\lambda$, and the set of minimum coset representatives in $W/W_\lambda$ by $W^\lambda$. Let $\Z[\Lambda]$ be the group algebra of the weight lattice $\Lambda$, which  has
a $\Z$-basis of formal exponents $\{x^\lambda \::\: \lambda\in\Lambda\}$ with
multiplication $x^\lambda\cdot x^\mu := x^{\lambda+\mu}$.

Given  $\alpha\in\Phi$ and $k\in\Z$, we denote by $s_{\alpha,k}$ the reflection in the affine hyperplane
\begin{equation}
H_{\alpha,k} := \{\lambda\in \hR \::\: \langle\lambda,\alpha^\vee\rangle=k\}.
\label{eqhyp}
\end{equation}
These reflections generate the {\it affine Weyl group\/} $\Waff$ for the {\em dual root system} 
$\Phi^\vee:=\{\alpha^\vee \::\: \alpha\in\Phi\}$. 
The hyperplanes $H_{\alpha,k}$ divide the real vector space $\hR$ into open
regions, called {\it alcoves.} 
The {\it fundamental alcove\/} $A_\circ$ is given by 
$$
A_\circ :=\{\lambda\in \hR \::\: 0<\langle\lambda,\alpha^\vee\rangle<1 \textrm{ for all }
\alpha\in\Phi^+\}.
$$

\subsection{Alcove walks}\label{alcovewalks}

We say that two alcoves $A$ and $B$ are {\it adjacent} 
if they are distinct and have a common wall.  
Given a pair of adjacent alcoves $A\ne B$ (i.e., having a common wall), we write 
$A\stackrel{\beta}\longrightarrow B$ if the common wall 
is of the form $H_{\beta,k}$ and the root $\beta\in\Phi$ points 
in the direction from $A$ to $B$.  

\begin{definition}
An {\em alcove path\/} is a sequence of alcoves
 such that any two consecutive ones are adjacent. 
We say that an alcove path $(A_0,A_1,\ldots,A_m)$ is {\it reduced\/} if $m$ is the minimal 
length of all alcove paths from $A_0$ to $A_m$.
\end{definition}

We need the following generalization of alcove paths.

\begin{definition}\label{defalcwalk} An {\em alcove walk} is a sequence 
$\Omega=(A_0,F_1,A_1, F_2, \ldots , F_m, A_m, F_{\infty})$ 
such that $A_0,\ldots,$ $A_m$ are alcoves; 
$F_i$ is a codimension one common face of the alcoves $A_{i-1}$ and $A_i$,
for $i=1,\ldots,m$; and 
$F_{\infty}$ is a vertex of the last alcove $A_m$. The weight $F_\infty$ is called the {\em weight} of the alcove walk, and is denoted by $\mu(\Omega)$. 
\end{definition}
 
The {\em folding operator} $\phi_i$ is the operator which acts on an alcove walk by leaving its initial segment from $A_0$ to $A_{i-1}$ intact and by reflecting the remaining tail in the affine hyperplane containing the face $F_i$. In other words, we define
$$\phi_i(\Omega):=(A_0, F_1, A_1, \ldots, A_{i-1}, F_i'=F_i,  A_{i}', F_{i+1}', A_{i+1}', \ldots,  A_m', F_{\infty}')\,;$$
here $A_j' := \rho_i(A_j)$ for $j\in\{i,\ldots,m\}$, $F_j':=\rho_i(F_j)$ for $j\in\{i,\ldots,m\}\cup\{\infty\}$, and $\rho_i$ is the affine reflection in the hyperplane containing $F_i$. Note that any two folding operators commute. An index $j$ such that $A_{j-1}=A_j$ is called a {\em folding position} of $\Omega$. Let $\mbox{fp}(\Omega):=\{ j_1<\ldots< j_s\}$ be the set of folding positions of $\Omega$. If this set is empty, $\Omega$ is called {\em unfolded}. Given this data, we define the operator ``unfold'', producing an unfolded alcove walk, by
\[\mbox{unfold}(\Omega)=\phi_{j_1}\ldots \phi_{j_s} (\Omega)\,.\]

\begin{definition} An  alcove walk
$\Omega=(A_0,F_1,A_1, F_2, \ldots , F_m, A_m, F_{\infty})$ is called {\em positively folded} if, for any folding position $j$, the alcove $A_{j-1}=A_j$ lies on the positive side of the affine hyperplane containing the face $F_j$.
\end{definition} 

We now fix a dominant weight $\lambda$ and a reduced alcove path $\Pi:=(A_0,A_1,\ldots,A_m)$ from $A_\circ=A_0$ to its translate $A_\circ + \lambda=A_m$. Assume that we have
\[A_0\stackrel{\beta_1}\longrightarrow A_1\stackrel{\beta_2}\longrightarrow \ldots
\stackrel{\beta_m}\longrightarrow A_{m}\,,\]
where $\Gamma:=(\beta_1,\ldots,\beta_m)$ is a sequence of positive roots. This sequence, which determines the alcove path, is called a {\em $\lambda$-chain} (of roots). Two equivalent definitions of $\lambda$-chains (in terms of reduced words in affine Weyl groups, and an interlacing condition) can be found in \cite{lapawg}[Definition 5.4] and \cite{lapcmc}[Definition 4.1 and Proposition 4.4]; note that the $\lambda$-chains considered in the mentioned papers are obtained by reversing the ones in the present paper. We also let $r_i:=s_{\beta_i}$, and let $\widehat{r}_i$ be the affine reflection in the common wall of $A_{i-1}$ and $A_i$, for $i=1,\ldots,m$; in other words, $\widehat{r}_i:=s_{\beta_i,l_i}$, where $l_i:=|\{j\le i\::\: \beta_j = \beta_i\}|$ is the cardinality of the corresponding set. Given $J=\{j_1<\ldots<j_s\}\subseteq[m]:=\{1,\ldots,m\}$, we define the Weyl group element $\phi(J)$ and the weight $\mu(J)$ by
\begin{equation}\label{defphimu}\phi(J):={r}_{j_1}\ldots {r}_{j_s}\,,\;\;\;\;\;\mu(J):=\widehat{r}_{j_1}\ldots \widehat{r}_{j_s}(\lambda)\,.\end{equation}

 Given $w\in W$, we define the alcove path $w(\Pi):=(w(A_0),w(A_1),\ldots,w(A_m))$. Consider the set of alcove paths
\[{\mathcal P}(\Gamma):=\{w(\Pi)\::\:w\in W^\lambda\}\,.\]
We identify any $w(\Pi)$ with the obvious unfolded alcove walk of weight $\mu(w(\Pi)):=w(\lambda)$. Let us now consider the set of alcove walks
\[{\mathcal F}_+(\Gamma):=\{\,\mbox{positively folded alcove walks $\Omega$}\::\:\mbox{unfold}(\Omega)\in{\mathcal P}(\Gamma)\}\,.\]
We can encode an alcove walk $\Omega$ in ${\mathcal F}_+(\Gamma)$ by the pair $(w,J)$ in $W^\lambda\times 2^{[m]}$, where 
\[\mbox{fp}(\Omega)=J\;\;\;\;\mbox{and}\;\;\;\;\mbox{unfold}(\Omega)=w(\Pi)\,.\]
Clearly, we can recover $\Omega$ from $(w,J)$ with $J=\{j_1<\ldots<j_s\}$ by
\[\Omega=\phi_{j_1}\ldots \phi_{j_s} (w(\Pi))\,.\]
Let ${\mathcal A}(\Gamma)$ be the image of ${\mathcal F}_+(\Gamma)$ under the map $\Omega\mapsto (w,J)$. We call a pair $(w,J)$ in ${\mathcal A}(\Gamma)$ an {\em admissible pair}, and the subset $J\subseteq[m]$ in this pair a $w$-{\em admissible subset}. 

\begin{proposition}\label{admpairs} {\rm (1)} We have
\begin{equation}\label{decch}{\mathcal A}(\Gamma)=\{\,(w,J)\in W^\lambda\times 2^{[m]}\::\: J=\{j_1<\ldots<j_s\}\,,\;\;w>wr_{j_1}>\ldots>wr_{j_1}\ldots r_{j_s}=w\phi(J)\}\,;\end{equation}
here the decreasing chain is in the Bruhat order on the Weyl group, its steps not being covers necessarily.

{\rm (2)} If $\Omega\mapsto (w,J)$, then
\[\mu(\Omega)=w(\mu(J))\,.\] 
\end{proposition}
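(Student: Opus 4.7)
The plan for part (1) is to track, at each step of the partially folded walk, its \emph{root label} $\gamma$---the positive or negative root with $s_\gamma$ equal to the linear part of the affine reflection across the current hyperplane---and to recast the positivity of each successive fold as a Bruhat descent. Apply the folds $\phi_{j_1},\phi_{j_2},\ldots$ to $w(\Pi)$ in increasing order of index, and set $u_k:=w\,r_{j_1}\cdots r_{j_{k-1}}$, so that $u_1=w$ and $u_{s+1}=w\phi(J)$. In $w(\Pi)$ itself, step $i$ crosses $w(H_{\beta_i,l_i})=H_{w(\beta_i),l_i}$, so its root label is $w(\beta_i)$. I will prove by induction on $k$ that after performing the folds at $j_1<\cdots<j_{k-1}$, the root label at every step $i>j_{k-1}$ equals $u_k(\beta_i)$. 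The inductive step follows from
\[
s_{u_k(\beta_{j_k})}\bigl(u_k(\beta_i)\bigr)\;=\;u_k\,s_{\beta_{j_k}}(\beta_i)\;=\;u_{k+1}(\beta_i),
\]
since $\phi_{j_k}$ reflects the root labels of all later steps by $s_\gamma$, with $\gamma$ equal to the current root label at $j_k$, namely $u_k(\beta_{j_k})$ by the inductive hypothesis.

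Granting this, the $k$-th fold is positive precisely when the post-fold alcove $A_{j_k-1}=A_{j_k}$ lies on the positive side of the hyperplane containing $F_{j_k}$. A direct sign check shows that this alcove sits on the $u_k(\beta_{j_k})$-negative side of that hyperplane; in the positive-root convention, that coincides with the positive side if and only if $u_k(\beta_{j_k})<0$. The standard criterion $u>us_\beta \Leftrightarrow u(\beta)<0$ for $\beta\in\Phi^+$ then turns this into $u_k>u_k r_{j_k}=u_{k+1}$. Imposing this for every $k\in\{1,\ldots,s\}$ produces the Bruhat-decreasing chain in (\ref{decch}); the converse direction is read off the same equivalence, showing that $\Omega\mapsto(w,J)$ is a bijection onto the advertised subset of $W^\lambda\times 2^{[m]}$.

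For part (2) I would induct on $s=|J|$, exploiting the commutativity of folding operators noted after Definition~\ref{defalcwalk} to apply them to $w(\Pi)$ in the \emph{reverse} order $j_s,j_{s-1},\ldots,j_1$. The key observation is that when $\phi_{j_k}$ is applied to a walk whose pre-existing folds all sit at positions strictly greater than $j_k$, the affine hyperplane containing $F_{j_k}$ is unchanged from the one in $w(\Pi)$, namely $w(H_{\beta_{j_k},l_{j_k}})$, and the corresponding affine reflection is $w\,\widehat r_{j_k}\,w^{-1}$. Starting from the endpoint $w(\lambda)$ of $w(\Pi)$ and successively applying these reflections, the endpoint evolves as
\[
w(\lambda)\;\longmapsto\;w\,\widehat r_{j_s}(\lambda)\;\longmapsto\;w\,\widehat r_{j_{s-1}}\widehat r_{j_s}(\lambda)\;\longmapsto\;\cdots\;\longmapsto\;w\,\widehat r_{j_1}\widehat r_{j_2}\cdots\widehat r_{j_s}(\lambda),
\]
which equals $w(\mu(J))$ by the definition of $\mu(J)$ in (\ref{defphimu}).

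The main obstacle sits in (1): making the ``positive side'' criterion precise once several folds have compounded, since each fold simultaneously alters the geometric hyperplane at later positions and the sign of the associated root label. Once the inductive root-label formula is established and the sign conventions are reconciled with the positive-root labeling, the translation of positivity into Bruhat descent via the standard length-function criterion is routine.
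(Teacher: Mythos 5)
Your proposal is correct and follows essentially the same route as the paper's (very terse) proof: part (1) rests on the criterion $\ell(us_\beta)<\ell(u)\iff u(\beta)<0$ once the root-label bookkeeping through successive folds is set up as you do, and part (2) is the commutation of $w$ with the folding operators, which is exactly your computation with the conjugated reflections $w\widehat{r}_{j_k}w^{-1}$ applied to the endpoint. The paper leaves all of this bookkeeping implicit, so your write-up is simply a fully detailed version of the same argument.
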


\begin{proof} The first part rests on the well-known fact that, given a positive root $\alpha$ and a Weyl group element $w$, we have $\ell(ws_\alpha)<\ell(w)$ if and only if $w(\alpha)$ is a negative root \cite[Proposition~5.7]{humrgc}. The second part follows from the simple fact that the action of $w$ on an alcove walk commutes with that of the folding operators. 
\end{proof}

The formula for the Hall-Littlewood $P$-polynomials in \cite{schghl} was rederived in \cite{ramawh} in a slightly different version, based on positively folded alcove walks. Based on Proposition \ref{admpairs}, we now restate the latter formula in terms of admissible pairs. 

\begin{theorem}\cite{ramawh,schghl} \label{hlpthm} Given a dominant weight $\lambda$, we have
\begin{equation}\label{hlpform}P_{\lambda}(X;t)=\sum_{(w,J)\in{\mathcal A}(\Gamma)}t^{\frac{1}{2}(\ell(w)+\ell(w\phi(J))-|J|)}\,(1-t)^{|J|}\,x^{w(\mu(J))}\,.\end{equation}
\end{theorem}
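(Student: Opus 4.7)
The plan is to derive Theorem~\ref{hlpthm} directly from Ram's original statement in \cite{ramawh}, which expresses $P_\lambda(X;t)$ as a sum over positively folded alcove walks $\Omega \in {\mathcal F}_+(\Gamma)$ rather than over admissible pairs. The encoding $\Omega \mapsto (w,J)$ introduced just before Proposition~\ref{admpairs} is a bijection from ${\mathcal F}_+(\Gamma)$ onto ${\mathcal A}(\Gamma)$, so the proof reduces to checking that each factor appearing in Ram's summand transports correctly under this bijection.

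Three of the correspondences are essentially definitional: the equality $|\mbox{fp}(\Omega)| = |J|$ is part of the definition of the encoding, giving $(1-t)^{|\mbox{fp}(\Omega)|} = (1-t)^{|J|}$; the weight identity $\mu(\Omega) = w(\mu(J))$ is the content of Proposition~\ref{admpairs}(2), giving $x^{\mu(\Omega)} = x^{w(\mu(J))}$; and $\mbox{unfold}(\Omega) = w(\Pi)$ by definition of the encoding, which identifies $w$ as the Weyl group element whose orbit the unfolded walk follows.

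The substantive step is to match the power of $t$. Ram's statistic is a signed crossing contribution of $\Omega$, counting (with appropriate signs) over the non-fold steps $i \in [m] \setminus \mbox{fp}(\Omega)$ whether the walk crosses the hyperplane $H_{\beta_i, l_i}$ in the positive or the negative direction. My approach would be to split $\Omega$ into $s+1$ segments delimited by the fold positions $j_1<\cdots<j_s$ in $J$, and note that on the $k$-th segment the walk is an unfolded translate of the sub-path of $u_k(\Pi)$ indexed by positions $j_k+1,\ldots,j_{k+1}$, where $u_k := wr_{j_1}\cdots r_{j_k}$ (with $u_0 := w$, $j_0 := 0$, $j_{s+1} := m$). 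Thus the oriented crossing root at position $i$ on this segment is $u_k(\beta_i)$, and its sign is governed by the criterion $\ell(u s_\alpha) < \ell(u)$ iff $u(\alpha) \in -\Phi^+$ invoked in the proof of Proposition~\ref{admpairs}. Summing contributions segment by segment and telescoping along the descending Bruhat chain $u_0 = w > u_1 > \cdots > u_s = w\phi(J)$ from Proposition~\ref{admpairs}(1), the signed crossing count collapses to a combination of $\ell(w)$, $\ell(w\phi(J))$, and $|J|$; a routine algebraic rearrangement then yields $\tfrac12(\ell(w) + \ell(w\phi(J)) - |J|)$, matching the exponent in \eqref{hlpform}.

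The main obstacle will be the precise bookkeeping in this telescoping step: one must segregate the fold positions (whose sole contribution is the factor $(1-t)$) from the non-fold crossings, and verify that the endpoint conventions give exactly $\ell(w\phi(J))$ at the terminal segment rather than some variant such as $\ell((w\phi(J))^{-1})$ or $\ell(w) - \ell(w\phi(J))$. Once this accounting is settled, the stated reformulation of Ram's formula follows by direct substitution.
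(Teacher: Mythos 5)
Your proposal follows the same route the paper takes: Theorem~\ref{hlpthm} is obtained by transporting Ram's formula through the encoding $\Omega\mapsto(w,J)$ of ${\mathcal F}_+(\Gamma)$ onto ${\mathcal A}(\Gamma)$, with Proposition~\ref{admpairs}(2) handling the weight and the fold count giving $|J|$; the paper states exactly this and offers no further argument. The one place where you take on more work than the paper intends is the $t$-exponent. In Ram's version the summand is already packaged as $t^{\frac{1}{2}(\ell(w_\Omega)+\ell(v_\Omega))}\bigl(t^{-1/2}(1-t)\bigr)^{f(\Omega)}x^{\mu(\Omega)}$, where $w_\Omega$ and $v_\Omega$ are the initial and final directions of the walk and $f(\Omega)$ is the number of folds; since each fold at position $j$ multiplies the current direction by $r_j$ on the right, the final direction of $\phi_{j_1}\ldots\phi_{j_s}(w(\Pi))$ is $wr_{j_1}\ldots r_{j_s}=w\phi(J)$, and the exponent is immediately $\frac{1}{2}(\ell(w)+\ell(w\phi(J))-|J|)$ with no telescoping needed. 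The signed-crossing statistic you attribute to Ram, together with the segment-by-segment telescoping along the chain $w>wr_{j_1}>\cdots>w\phi(J)$, is really the content of passing from Schwer's gallery formula to Ram's reformulation; it is a correct and standard argument (the key input is precisely the criterion $\ell(us_\alpha)<\ell(u)$ iff $u(\alpha)\in-\Phi^+$ that you cite), but it reproves a step that the paper simply imports from \cite{ramawh}. So your proof is sound, just longer than necessary given the chosen starting point.
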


\subsection{The Macdonald polynomial inversion statistic} 

This subsection recalls the setup in \cite{hhlcfm}, closely following the presentation there.

Let $\lambda = (\lambda _{1}\geq \lambda _{2}\geq \ldots \geq \lambda _{l})$ with $\lambda_l>0$ be a
partition of $m = \lambda _{1}+\ldots +\lambda _{l}$. The number of parts $l$ is known as the {\em length} of $\lambda$, and is denoted by $\ell(\lambda)$. Using standard notation, one defines
\[n(\lambda):=\sum_{i}(i-1)\lambda_i\,.\]
We identify $\lambda$ with its Young (or Ferrers) diagram
\begin{equation*}
 \{(i,j)\in  \Z _{+}\times \Z _{+}: j\leq \lambda _{i}\}\,,
\end{equation*}
whose elements are called {\it
cells}.   Diagrams are drawn in ``Japanese style'' (i.e., in the third quadrant), as shown below:
\begin{equation*}
\lambda =(2,2,2,1) = \tableau{{}&{}\\ {}&{}\\{}&{}\\&{}}\;;
\end{equation*}
the rows and columns are increasing in the negative direction of the axes. We denote, as usual, by $\lambda'$ the conjugate partition of $\lambda$ (i.e., the reflection of the diagram of $\lambda$ in the line $y=-x$, which will be drawn in French style). For any cell $u=(i,j)$ of $\lambda$ with $j\ne 1$, denote the cell $v=(i,j-1)$ directly to the right of $u$ by $\rt(u)$. 

Two cells $u,v\in \lambda $ are said to {\it attack} each other if either
\begin{itemize}
\item [(i)] they are in the same column: $u = (i,j)$, $v = (k,j)$; or
\item [(ii)] they are in consecutive columns, with the cell in the left column
strictly below the one in the right column: $u=(i,j)$,
$v=(k,j-1)$, where $i>k$.
\end{itemize}
The figure below shows the two types of pairs of attacking cells.
\begin{equation*}
\text{(i)}\quad \tableau{{\bullet}&{}\\ {}&{}\\{\bullet}&{}\\&{}} \, ,\qquad
\text{(ii)}\quad  \tableau{{}&{\bullet}\\ {}&{}\\{\bullet}&{}\\&{}}\; .
\end{equation*}

A {\it filling} is a function
$\sigma \::\: \lambda \rightarrow [n]:=\{1,\ldots,n\}$ for some $n$, that is, an assignment of values in $[n]$ to the cells of $\lambda$.  As usual, we define the content of a filling $\sigma$ as ${\rm ct}(\sigma):=(c_1,\ldots,c_n)$, where $c_i$ is the number of entries $i$ in the filling, i.e., $c_i:=|\sigma^{-1}(i)|$. The monomial $x^{{\rm ct}(\sigma)}$ of degree $m$ in the variables $x_{1},\ldots,x_n$ is then given by
\begin{equation*}\label{e:xsigma}
x^{{\rm ct}(\sigma)} := x_1^{c_1}\ldots x_n^{c_n}\,.
\end{equation*}

\begin{definition}\label{deff} Let $\F(\lambda,n)$ denote the set of fillings $\sigma\::\:\lambda\rightarrow [n]$ satisfying
\begin{itemize}
\item $\sigma(u)\ne\sigma(v)$ whenever  $u$ and $v$ attack each other, and
\item $\sigma$ is weakly decreasing in rows, i.e., $\sigma(u)\ge\sigma(\rt(u))$.
\end{itemize}
\end{definition}

The {\it (Japanese) reading order} is the total order on the cells of $\lambda $
given by reading each column from top to bottom, and by considering the columns from left to right.

\begin{definition} An {\it
inversion} of $\sigma $ is a pair of attacking cells $(u,v)$ where  $\sigma (u)<\sigma (v)$ and $u$ precedes $v$ in the
reading order.  
\end{definition}

Here are two examples of inversions, where $a<b$:
\begin{equation*}
\tableau{{a}&{}\\ {}&{}\\{b}&{}\\&{}} \, ,\qquad 
\tableau{{}&{b}\\ {}&{}\\{a}&{}\\&{}}\,.
\end{equation*}

\begin{definition} The {\em inversion statistic} on fillings $\sigma$, denoted $\inv(\sigma)$, is the number of inversions of $\sigma$. The {\em descent statistic}, denoted $\des(\sigma)$, is the number of cells $u=(i,j)$ with $j\ne 1$ and $\sigma(u)>\sigma(\rt(u))$.
\end{definition}

\begin{remark}\label{convfill} The inversion statistic is defined for arbitrary fillings in \cite{hhlcfm}, being used in the combinatorial formula for the Macdonald polynomials. In order to specialize the general definition to the one in this paper, one needs to restrict the entries of the fillings to $[n]$, replace each entry $i$ with $n+1-i$, and convert French diagrams to Japanese ones via the reflection in the line $y=-x$, as discussed above. 
\end{remark}

We are now ready to state a combinatorial formula for the Hall-Littlewood $Q$-polynomials in the variables $X=(x_1,\ldots,x_n)$ for a fixed $n$. This is quickly derived from the formula for the Macdonald's {\em integral form} symmetric polynomials $J_\lambda(X;q,t)$ \cite[Proposition 8.1]{hhlcfm} upon setting the variable $q$ to 0. In fact, as discussed in Remark \ref{convfill}, one also needs to set $x_i=0$ for $i>n$, and replace $x_i$ by $x_{n+1-i}$ for $i\in[n]$ in the cited formula (the Hall-Littlewood polynomials being symmetric in $X$). 

\begin{theorem}{\rm (}cf. \cite{hhlcfm}{\rm )}\label{hlqthm}
For any partition $\lambda$, we have
\begin{equation}\label{hlqform}
Q_{\lambda}(X;t) = \sum_{\sigma\in\mathcal{F}(\lambda,n)}
 t^{n(\lambda) - \inv (\sigma )}\,(1-t)^{\ell(\lambda)+\des(\sigma)}\: x^{{\rm ct}(\sigma) } \,.
\end{equation}
\end{theorem}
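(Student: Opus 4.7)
The plan is to obtain this formula as a direct $q\to 0$ specialization of the Haglund-Haiman-Loehr formula for the integral form Macdonald polynomial $J_\lambda(X;q,t)$ (Proposition~8.1 of \cite{hhlcfm}), combined with the standard identity $J_\lambda(X;0,t) = Q_\lambda(X;t)$ and the French-to-Japanese dictionary spelled out in Remark~\ref{convfill}. I would take HHL's original formula as input rather than re-derive it.

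First I would recall the HHL formula: a sum over all positive-integer fillings of $\lambda$ in the French convention, each filling $\sigma$ weighted by $x^\sigma q^{\mathrm{maj}(\sigma)} t^{\mathrm{inv}(\sigma)}$ times a product of cell factors, where each factor is $(1-t)$ when a cell and the one directly below share the same value and $(1 - q^{a(u)+1}t^{l(u)+1})$ otherwise. Setting $q=0$ then has two effects. First, the factor $q^{\mathrm{maj}(\sigma)}$ annihilates every filling that has any descent between a cell and the one below it (French), so the surviving fillings are weakly increasing reading down columns; combined with the non-attacking requirement that forbids equalities within a column, columns become strictly increasing downward. After the $y=-x$ reflection prescribed by Remark~\ref{convfill}, this is exactly the weakly-decreasing-in-rows condition of Definition~\ref{deff}. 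Second, each $(1 - q^{a+1}t^{l+1})$ factor collapses to $1$, so only the $(1-t)$ factors remain.

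What is left is to match the statistics. The cells contributing a $(1-t)$ are precisely those that equal the cell directly below them in the French picture; under the reflection, these become the $\ell(\lambda)$ cells in the row adjacent to the basement together with the cells counted by the horizontal descent statistic $\mathrm{des}(\sigma)$ of Definition~\ref{deff}, producing the factor $(1-t)^{\ell(\lambda)+\mathrm{des}(\sigma)}$. For the $t$-power, the entry reversal $i\mapsto n+1-i$ swaps the roles of inversion and non-inversion for every attacking pair, so $\mathrm{inv}_{\mathrm{HHL}}(\sigma) = N(\lambda) - \mathrm{inv}(\sigma)$ for some constant $N(\lambda)$ depending only on the shape, and one is reduced to identifying $N(\lambda)$ with $n(\lambda)$. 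Substitution of the variables $x_i\mapsto x_{n+1-i}$ is harmless since $Q_\lambda(X;t)$ is symmetric.

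The main obstacle is the careful bookkeeping behind the last identification. Concretely, I would count the total number of attacking pairs in a column of height $k$ summed across the diagram, and show that this total is $n(\lambda)$, so that under the reversal of the alphabet every attacking pair contributes once to the constant term. The cleanest verification is to compare both sides on a single well-chosen filling (for example, the one assigning $k$ to every cell in the $k$-th row from the top), where the HHL inv and the present inv can both be read off directly, pinning down the shift and confirming the formula.
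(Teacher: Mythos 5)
Your overall strategy --- specialize Proposition~8.1 of \cite{hhlcfm} at $q=0$, use $J_\lambda(X;0,t)=Q_\lambda(X;t)$, and translate conventions as in Remark~\ref{convfill} --- is exactly what the paper does; the paper offers nothing beyond that citation, so you are on the intended route. However, your execution of the degeneration has a genuine flaw that makes the argument internally inconsistent. You assert both that the non-attacking condition forbids equal entries within a column, so that at $q=0$ the surviving columns are \emph{strictly} increasing downward, and that the $(1-t)$ factors are attached to cells equal to the cell directly below them. If columns were strictly increasing there would be no such cells, so your own account would produce no $(1-t)^{\des(\sigma)}$ contribution at all. In fact the non-attacking condition constrains the rows of the diagram being filled (it is what becomes conditions (i)--(ii) of Definition~\ref{deff} after the reflection); vertical repetitions are allowed, and they are precisely what the ``weakly decreasing in rows'' condition of Definition~\ref{deff} permits after translation (note also that ``strictly increasing'' cannot reflect to ``weakly decreasing''). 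Moreover the assignment of cell factors is the reverse of what the target formula requires: since $\ell(\lambda)+\des(\sigma)$ counts the cells that \emph{strictly differ} from their right neighbour together with the $\ell(\lambda)$ cells of the rightmost column (which differ from the basement by convention), the $(1-t)$ factors must sit on the cells with $\hat\sigma(u)\neq\hat\sigma(\mathrm{South}(u))$, while the cells with equality carry the factors $(1-q^{\mathrm{leg}+1}t^{\mathrm{arm}+1})$ that collapse to $1$ at $q=0$. With your assignment the exponent of $(1-t)$ would come out as $|\lambda|-\ell(\lambda)-\des(\sigma)$ rather than $\ell(\lambda)+\des(\sigma)$.

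The bookkeeping for the $t$-exponent is also off. The constant by which a reversal of the alphabet complements the inversion count is the \emph{total} number of attacking pairs of the diagram, which is $n(\lambda)=\sum_j\binom{\lambda'_j}{2}$ (the same-column pairs) \emph{plus} the number of attacking pairs of type (ii) between adjacent columns; so identifying your shift $N(\lambda)$ with $n(\lambda)$ fails whenever $\lambda$ has more than one column. What actually needs to be tracked is that Proposition~8.1 of \cite{hhlcfm} already carries the complemented exponent $n(\mu')-\inv$, and that Remark~\ref{convfill} performs two reversals at once --- of the alphabet and, via the reflection, of the reading order --- whose net effect on the inversion status of each attacking pair must be determined; one cannot account for the alphabet reversal alone. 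Checking a single filling, as you propose, is a useful sanity check but does not establish that the discrepancy is a constant depending only on the shape.
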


\begin{definition} We call $n(\lambda)-\inv(\sigma)$ the {\em complementary inversion statistic}, and denote it by $\cinv(\sigma)$.
\end{definition}

\begin{proposition} The statistic $\cinv(\sigma)$ counts the pairs of cells $(u,v)$ in the same column with $u$ below $v$, such that $\sigma(u)<\sigma(v)<\sigma(w)$, where $w$ is the cell directly to the left of $u$, if it exists (otherwise, the condition consists only of the first inequality).
\end{proposition}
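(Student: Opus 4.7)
The plan is a direct bijective comparison, column by column. The starting point is the identity $n(\lambda)=\sum_j\binom{\lambda'_j}{2}$, which expresses $n(\lambda)$ as the total number of ordered pairs $(u,v)$ of cells in the same column with $u$ below $v$. Since cells in a common column attack one another, $\sigma(u)\neq\sigma(v)$, and this total splits as $|A|+|A^c|$, where
\[A:=\{(u,v):\text{same column},\ u\text{ below }v,\ \sigma(u)<\sigma(v)\},\]
and $A^c$ is the complementary set (with $\sigma(u)>\sigma(v)$). Reindexing each $(u,v)\in A^c$ as $(v,u)$ puts the top cell first, matching the reading order; the image is precisely the set of same-column inversions of $\sigma$. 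Hence, writing $\inv_{\mathrm{II}}(\sigma)$ for the number of type (ii) inversions, we obtain
\[\cinv(\sigma)=n(\lambda)-\inv(\sigma)=|A|-\inv_{\mathrm{II}}(\sigma).\]

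To finish, let $B\subseteq A$ be the subset described in the proposition, namely those $(u,v)\in A$ for which the cell $w$ directly to the left of $u$ either does not exist, or exists with $\sigma(v)<\sigma(w)$. It suffices to exhibit a bijection between $A\setminus B$ and the set of type (ii) inversions of $\sigma$. Given $(u,v)\in A\setminus B$ with $u=(i,j)$ and $v=(k,j)$, $k<i$, the cell $w=(i,j+1)$ exists and satisfies $\sigma(w)\le\sigma(v)$; since $w$ and $v$ are attacking of type (ii) (the cell $w$ in the left column $j+1$ lies strictly below $v$ in the right column $j$), the inequality is strict, so $(w,v)$ is a type (ii) inversion. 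I send $(u,v)\mapsto(w,v)$.

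For the inverse, given a type (ii) inversion $(w,v)$ with $w=(i,j+1)$, $v=(k,j)$, $i>k$, $\sigma(w)<\sigma(v)$, set $u:=\rt(w)=(i,j)$; this cell lies in $\lambda$ since $w$ does, and the row-weakly-decreasing hypothesis on $\sigma$ gives $\sigma(u)\le\sigma(w)<\sigma(v)$, so $(u,v)\in A$, while the existence of $w$ together with $\sigma(w)<\sigma(v)$ places $(u,v)$ outside $B$. These two assignments are mutually inverse, yielding $|A\setminus B|=\inv_{\mathrm{II}}(\sigma)$, and hence $|B|=\cinv(\sigma)$ as required. The one subtle point is the Japanese diagram convention---``directly to the left of $u=(i,j)$'' refers to the cell $(i,j+1)$ with \emph{larger} column index---after which the row-decrease inequality $\sigma(\rt(w))\le\sigma(w)$ provides the essential glue linking the two sides of the bijection.
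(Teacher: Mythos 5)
Your proof is correct and follows essentially the same route as the paper's: both decompose the $n(\lambda)$ same-column pairs, match the descending ones with type (i) inversions and the ascending pairs outside the described set with type (ii) inversions via $u\leftrightarrow w=(i,j+1)$, with the non-attacking condition supplying strictness and the row-weak-decrease $\sigma(\rt(w))\le\sigma(w)$ supplying the key exclusion. You merely spell out explicitly the bijection with type (ii) inversions that the paper leaves implicit in its remark that $\sigma(w)<\sigma(v)$ and $\sigma(v)<\sigma(u)$ cannot hold simultaneously.
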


\begin{proof}
Observe first that $n(\lambda)$ is the number of all pairs of cells $(u,v)$ in the same column with $u$ below $v$. We have $\sigma(v)\ne\sigma(u)$ and, if $w$ exists, then $\sigma(w)\ne\sigma(v)$, by the first condition in Definition \ref{deff}. The result follows simply by noting that, if $w$ exists, then we cannot simultaneously have $\sigma(w)<\sigma(v)$ and $\sigma(v)<\sigma(u)$, because the second condition in Definition \ref{deff} would be contradicted. 
\end{proof}

Here is an example of a configuration counted by the complementary inversion statistic, where $a<b<c$.
\[\tableau{{}&{b}\\ {}&{}\\{c}&{a}\\&{}}\,.\]

We conclude this section by recalling the relation between the Hall-Littlewood polynomials $P_\lambda(X;t)$ and $Q_\lambda(X;t)$. Assume that $\lambda$ has $m_i$ parts equal to $i$ for each $i$. As usual, for any positive integer $m$, we write
\[[m]_t!:=[m]_t[m-1]_t\ldots 1_t\,,\;\;\;\mbox{where }\,[k]_t:=\frac{1-t^k}{1-t}\,.\]
Then
\begin{equation}\label{pq}
P_\lambda(X;t)=\frac{1}{(1-t)^{\ell(\lambda)}[m_1]_t!\ldots [m_n]_t!}\:Q_\lambda(X;t)\,.
\end{equation}
Note that (\ref{hlqform}) makes the divisibility of $Q_\lambda(X;t)$ by $(1-t)^{\ell(\lambda)}$ obvious. 

\section{The compression phenomenon}

\subsection{Specializing Schwer's formula to type $A$}\label{specschwer} We now restrict ourselves to the root system of type $A_{n-1}$, fow which the Weyl group $W$ is the symmetric group $S_n$. Permutations $w\in S_n$ are written in one-line notation $w=w(1)\ldots w(n)$. 
We can identify the space $\h_\R^*$ with the quotient space 
$V:=\R^n/\R(1,\ldots,1)$,
where $\R(1,\ldots,1)$ denotes the subspace in $\R^n$ spanned 
by the vector $(1,\ldots,1)$.  
The action of the symmetric group $S_n$ on $V$ is obtained 
from the (left) $S_n$-action on $\R^n$ by permutation of coordinates.
Let $\varepsilon_1,\ldots,\varepsilon_n\in V$ 
be the images of the coordinate vectors in $\R^n$.
The root system $\Phi$ can be represented as 
$\Phi=\{\alpha_{ij}:=\varepsilon_i-\varepsilon_j \::\: i\ne j,\ 1\leq i,j\leq n\}$.
The simple roots are $\alpha_i=\alpha_{i,i+1}$, 
for $i=1,\ldots,n-1$.
The fundamental weights are $\omega_i = \varepsilon_1+\ldots +\varepsilon_i$, 
for $i=1,\ldots,n-1$. 
The weight lattice is $\Lambda=\Z^n/\Z(1,\ldots,1)$. A dominant weight $\lambda=\lambda_1\varepsilon_1+\ldots+\lambda_{n-1}\varepsilon_{n-1}$ is identified with the partition $(\lambda _{1}\geq \lambda _{2}\geq \ldots \geq \lambda _{n-1}\geq\lambda_n=0)$ of length at most $n-1$. We fix such a partition $\lambda$ for the remainder of this paper.

For simplicity, we use the same notation $(i,j)$ with $i<j$ for the root $\alpha_{ij}$ and the reflection $s_{\alpha_{ij}}$, which is the transposition of $i$ and $j$.  
We proved in \cite[Corollary 15.4]{lapawg} that, for any $k=1,\ldots,n-1$,
we have the following $\omega_k$-chain, denoted by $\Gamma(k)$:
\begin{equation}\label{omegakchain}\begin{array}{lllll}
(&\!\!\!\!(1,n),&(1,n-1),&\ldots,&(1,k+1)\,,\\
&\!\!\!\!(2,n),&(2,n-1),&\ldots,&(2,k+1)\,,\\
&&&\ldots\\
&\!\!\!\!(k,n),&(k,n-1),&\ldots,&(k,k+1)\,\,)\,.
\end{array}\end{equation}
Hence, we can construct a $\lambda$-chain as a concatenation $\Gamma:=\Gamma_{\lambda_1}\ldots\Gamma_1$, where $\Gamma_j=\Gamma(\lambda'_j)$. This $\lambda$-chain is fixed for the remainder of this paper. Thus, we can replace the notation ${\mathcal A}(\Gamma)$ with ${\mathcal A}(\lambda)$.

\begin{example}\label{ex21} {\rm Consider $n=4$ and $\lambda =(2,1,0,0)$, for which we have the following $\lambda$-chain (the underlined pairs are only relevant in Example \ref{ex21c} below):
\begin{equation}\label{exlchain}\Gamma=\Gamma_2\Gamma_1=(\underline{(1,4)},(1,3),(1,2)\:|\:(1,4),\underline{(1,3)},(2,4),\underline{(2,3)})\,.\end{equation}
We represent the Young diagram of $\lambda$ inside a broken $4\times 2$ rectangle, as below. In this way, a transpositions $(i,j)$ in $\Gamma$ can be viewed as swapping entries in the two parts of each column (in rows $i$ and $j$, where the row numbers are also indicated below). 
\begin{equation*}
 \begin{array}{l} \tableau{{1}&{1}\\ &{2}}\\ \\
\tableau{{2}\\ {3}&{3}\\ {4}&{4}} \end{array}
\end{equation*}}
\end{example}

Given the $\lambda$-chain $\Gamma$ above, in Section \ref{alcovewalks} we considered subsets $J=\{ j_1<\ldots< j_s\}$ of $[m]$, where in the present case $m=\sum_{i=1}^{\lambda_1}\lambda_i'(n-\lambda_i')$. Instead of $J$, it is now convenient to use the subsequence of $\Gamma$ indexed by the positions in $J$. This is viewed as a concatenation with distinguished factors $T=T_{\lambda_1}\ldots T_1$ induced by the factorization of $\Gamma$ as $\Gamma_{\lambda_1}\ldots\Gamma_1$. More precisely, $T_j$ is the subsequence of $\Gamma_j$ indexed by the following positions in $\Gamma$:
\[J\cap\left(\sum_{i=j+1}^{\lambda_1}\lambda_i'(n-\lambda_i'),\:\sum_{i=j}^{\lambda_1}\lambda_i'(n-\lambda_i')\right]\,.\]

All the notions defined in terms of $J$ are now redefined in terms of $T$. As such, from now on we will write  $\phi(T)$, $\mu(T)$, and $|T|$, the latter being the size of $T$. If $J$ is a $w$-admissible subset for some $w$ in $S_n^\lambda$, we will also call the corresponding $T$ a {\em $w$-admissible sequence}, and $(w,T)$ an admissible pair. We will use the notation ${\mathcal A}(\Gamma)$ and ${\mathcal A}(\lambda)$ accordingly.  

We denote by $wT_{\lambda_1}\ldots T_{j}$ the permutation obtained from $w$ via right multiplication by the transpositions in $T_{\lambda_1},\ldots, T_{j}$, considered from left to right. This agrees with the above convention of using pairs to denote both roots and the corresponding reflections. As such, $\phi(J)$ in (\ref{defphimu}) can now be written simply $T$. 

\begin{remark}\label{crit} Let us introduce the following order on pairs $(a,b)$ with $a<b$:
\[(a,b)\prec(c,d)\;\;\;\mbox{if $(a<c)$ or ($a=c$ and $b>d$)}\,.\]
In the present setup, an admissible pair can be defined as a pair $(w,T)$, where $w$ is a permutation in $S_n^\lambda$, $T=T_{\lambda_1}\ldots T_1$, and each $T_j$ is a sequence of pairs $((a_1,b_1),\ldots,(a_p,b_p))$ such that
\begin{itemize}
\item $1\le a_i\le\lambda_j'< b_i\le n$ for $i=1,\ldots,p$;
\item $(a_i,b_i)\prec(a_{i+1},b_{i+1})$  for $i=1,\ldots,p-1$;
\item $T_j$ is the sequence of labels on a chain in the reverse Bruhat graph on $S_n$ which starts at $wT_{\lambda_1}\ldots T_{j+1}$. 
\end{itemize}
\end{remark}

\begin{example}\label{ex21c}{\rm We continue Example \ref{ex21}, by picking the admissible pair $(w,J)$ with $w=4312\in S_4^\lambda$ and $J=\{1,5,7\}$ (see the underlined positions in (\ref{exlchain})). Thus, we have
\[T=T_2T_1=((1,4)\:|\:(1,3),(2,3))\,.\]
The corresponding decreasing chain in Bruhat order is the following, where the swapped entries are shown in bold (we represent permutations as broken columns, as discussed in Example \ref{ex21}):
\[w=\begin{array}{l}\tableau{{{\mathbf 4}}} \\ \\ \tableau{{3}\\{1}\\{{\mathbf 2}}} \end{array} \:>\:\begin{array}{l}\tableau{{2}} \\ \\ \tableau{{3}\\{1}\\{4}} \end{array}\:|\: \begin{array}{l}\tableau{{{\mathbf 2}}\\{3}} \\ \\ \tableau{{{\mathbf 1}}\\{4}} \end{array}\:>\: \begin{array}{l}\tableau{{1}\\{{\mathbf 3}}} \\ \\ \tableau{{{\mathbf 2}}\\{4}}\end{array}\:>\: \begin{array}{l}\tableau{{1}\\{2}}\\ \\ \tableau{{3}\\{4}}\end{array} \,.\]
}
\end{example}

\subsection{The filling map} Given a pair $(w,T)$, not necessarily admissible, we consider the permutations
\[\pi_j=\pi_j(w,T):=wT_{\lambda_1}T_{\lambda_1-1}\ldots T_{j+1}\,,\]
for $j=1,\ldots,\lambda_1$. In particular, $\pi_{\lambda_1}=w$. 

\begin{definition}\label{deffill}
The {\em filling map} is the map $f$ from pairs $(w,T)$, not necessarily admissible, to fillings $\sigma=f(w,T)$ of the shape $\lambda$, defined by
\begin{equation}\label{defswt}\sigma(i,j):=\pi_j(i)\,.\end{equation}
In other words, the $j$-th column of the filling $\sigma$ consists of the first $\lambda_j'$ entries of the permutation $\pi_j$.
\end{definition}

\begin{example} {\rm Given $(w,T)$ as in Example \ref{ex21c}, we have
\[f(w,T)=\tableau{{4}&{2}\\&{3}}\,.\]}
\end{example}

\begin{proposition}\label{weightmon} Given a permutation $w$ and any subsequence $T$ of $\Gamma$ (not necessarily $w$-admissible), we have ${\rm ct}(f(w,T))=w(\mu(T))$. In particular, $w(\mu(T))$ only depends on $f(w,T)$. \end{proposition}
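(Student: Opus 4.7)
The plan is to prove the identity in two stages: first reduce to the case $w=e$, then prove $\ct(f(e,T))=\mu(T)$ by induction on $|T|$, adding one fold at a time. The geometric ingredient is that every affine reflection $\widehat{r}_i$ coming from a position $i$ in the factor $\Gamma_j$ of the fixed $\lambda$-chain fixes the partial weight
$$\lambda^{(j)}:=\omega_{\lambda'_{\lambda_1}}+\omega_{\lambda'_{\lambda_1-1}}+\cdots+\omega_{\lambda'_j}.$$

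The reduction to $w=e$ is immediate from $\pi_j=w\cdot(T_{\lambda_1}\cdots T_{j+1})$ and the right-multiplication convention: if $\pi_j^{(0)}:=T_{\lambda_1}\cdots T_{j+1}$, then $\pi_j(\omega_{\lambda'_j})=w(\pi_j^{(0)}(\omega_{\lambda'_j}))$, so $\ct(f(w,T))=w\cdot\ct(f(e,T))$. Since $\mu(T)$ does not depend on $w$, the case $w=e$ suffices. For the geometric lemma, I compute the level $l_i$ for $i\in\Gamma_j$: if $\beta_i=(a,b)$ with $a\le\lambda'_j<b$, then $(a,b)$ appears in $\Gamma_{j'}$ iff $a\le\lambda'_{j'}$, i.e., $j'\le\lambda_a$, so counting occurrences in $\Gamma_{\lambda_1},\ldots,\Gamma_{j+1}$ and adding one gives $l_i=\lambda_a-j+1$. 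A direct coordinate check shows $(\lambda^{(j)})_a=\lambda_a-j+1$ and $(\lambda^{(j)})_b=0$, whence $\langle\lambda^{(j)},\beta_i^\vee\rangle=l_i$. Therefore $\lambda^{(j)}\in H_{\beta_i,l_i}$ and $\widehat{r}_i(\lambda^{(j)})=\lambda^{(j)}$ for every $i\in\Gamma_j$.

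For the induction, write $J=\{j_1<\cdots<j_s\}$ and set $\mu_l:=\widehat{r}_{j_l}\widehat{r}_{j_{l+1}}\cdots\widehat{r}_{j_s}(\lambda)$, so $\mu_{s+1}=\lambda$ and $\mu_1=\mu(T)$. For each $j$ and $l$, let $\pi^{[l]}_j$ denote the product, in chain order, of the reflections $r_{j_k}$ with $k\ge l$ and $j_k\in\Gamma_{l'}$ for some $l'>j$. The inductive claim is
$$\mu_l=\sum_{j=1}^{\lambda_1}\pi^{[l]}_j(\omega_{\lambda'_j}),$$
which for $l=1$ reads $\mu(T)=\sum_j\pi_j(\omega_{\lambda'_j})=\ct(f(e,T))$ as desired. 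To pass from $l+1$ to $l$, let $l^*$ satisfy $j_l\in\Gamma_{l^*}$ and split $\mu_{l+1}=P+Q$ where $Q:=\sum_{j\ge l^*}\pi^{[l+1]}_j(\omega_{\lambda'_j})$. All folds $j_{l+1},\ldots,j_s$ exceed $j_l$ in chain order and so lie in factors $\Gamma_{l'}$ with $l'\le l^*$; hence none of them contributes to $\pi^{[l+1]}_j$ when $j\ge l^*$, making $\pi^{[l+1]}_j$ the identity on this range and $Q=\lambda^{(l^*)}$. By the lemma $\widehat{r}_{j_l}(Q)=Q$, so $\widehat{r}_{j_l}(P+Q)=r_{j_l}(P)+Q$. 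On the combinatorial side, $j_l$ is the smallest position in the product defining $\pi^{[l]}_j$, hence is inserted at the head; this gives $\pi^{[l]}_j=r_{j_l}\,\pi^{[l+1]}_j$ for $j<l^*$ and $\pi^{[l]}_j=\pi^{[l+1]}_j$ for $j\ge l^*$, matching $r_{j_l}(P)+Q$ term by term.

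The main obstacle is the level computation and the verification that $\lambda^{(j)}$ sits on every hyperplane $H_{\beta_i,l_i}$ with $i\in\Gamma_j$: it requires careful bookkeeping of how many times a given root $(a,b)$ occurs in the concatenation $\Gamma_{\lambda_1}\cdots\Gamma_{j+1}$, and a coordinate-by-coordinate check that the resulting count $\lambda_a-j+1$ coincides with $(\lambda^{(j)})_a$. Once that geometric fact is secured, the inductive mechanism is essentially forced: the affine part of $\widehat{r}_{j_l}$ is absorbed into the ``already-completed'' piece $Q$, while its linear part reflects the ``not-yet-completed'' piece $P$, which matches exactly the combinatorial effect of inserting $r_{j_l}$ at the head of $\pi^{[l]}_j$ for each $j<l^*$. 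The corollary that $w(\mu(T))$ depends only on $f(w,T)$ is then immediate, since the left-hand side is obviously so.
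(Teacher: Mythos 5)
Your proof is correct, and it takes a genuinely different route from the paper's. The paper conjugates the affine reflections $\widehat{r}_{j_i}$ into the reflections $\widehat{t}_{j_i}$ fixing the hyperplanes of the folded walk, proves a separate level formula (Proposition \ref{level}) expressing each level $m_k$ as $N_c(\sigma[q])-N_d(\sigma[q])$ by an induction with several coordinate computations, and then deduces Proposition \ref{weightmon} by applying $\widehat{t}_{j_{s+1}}=s_{(c,d),m}$ to $\ct(\sigma_s)$ and matching entry counts. You instead reduce to $w=e$ by linearity, observe that $\ct(f(e,T))=\sum_j \pi_j(\omega_{\lambda'_j})$, and run a downward induction on the partial products $\mu_l=\widehat{r}_{j_l}\cdots\widehat{r}_{j_s}(\lambda)$, with the single geometric input that every $\widehat{r}_i$ with $i$ in the factor $\Gamma_j$ fixes $\lambda^{(j)}=\sum_{j'\ge j}\omega_{\lambda'_{j'}}$; this makes the affine part of each reflection act trivially on the ``completed'' columns $Q$ and reduces its action on the remaining columns $P$ to the linear reflection $r_{j_l}$, exactly mirroring the insertion of $r_{j_l}$ at the head of the products $\pi^{[l]}_j$. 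Your level computation $l_i=\lambda_a-j+1$ and the verification $\langle\lambda^{(j)},\beta_i^\vee\rangle=l_i$ are correct (the implicit point that the condition $\lambda'_{j'}<b$ is automatic for $j'\ge j$ deserves a word, since $\lambda'$ is weakly decreasing), and the identity $s_{\beta,k}(P+Q)=s_\beta(P)+Q$ when $\langle Q,\beta^\vee\rangle=k$ is exactly what is needed. What your argument buys is economy: it bypasses the conjugated reflections and Proposition \ref{level} entirely. What it gives up is the by-product of the paper's route, namely the explicit description of all hyperplane levels of the folded walk in terms of the filling, which is information of independent interest even though it is not used elsewhere in the paper.
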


This proposition is proved in Section \ref{pfweight}, where it is shown how to recover the affine information from the fillings.

\begin{proposition}\label{surjmap}
We have $f(\mathcal{A}(\lambda))\subseteq\F(\lambda,n)$. If the partition $\lambda$ has $n-1$ distinct non-zero parts, then the map $f\::\:\mathcal{A}(\lambda)\rightarrow\F(\lambda,n)$ is surjective. 
\end{proposition}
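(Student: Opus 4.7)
The containment $f(\mathcal{A}(\lambda))\subseteq\F(\lambda,n)$ amounts to verifying, for $\sigma=f(w,T)$, the two conditions of Definition \ref{deff}. I would begin with the weakly-decreasing-rows condition: every transposition in $T_j$ has the form $(a,b)$ with $a\leq\lambda'_j<b$, and being a step in a Bruhat-descending chain means that the current value at position $a$ strictly exceeds the value at position $b$. Hence, as the steps of $T_j$ are applied in sequence, the value at any position $i\leq\lambda'_j$ can only weakly decrease, giving $\sigma(i,j)=\pi_j(i)\geq\pi_{j-1}(i)=\sigma(i,j-1)$. Distinctness of attacking cells in the same column is automatic since $\pi_j$ is a permutation.

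The delicate case is attacking cells in consecutive columns: given $k<i\leq\lambda'_j$, I must show $V:=\pi_j(i)\neq\pi_{j-1}(k)$. I would trace the trajectory of the value $V$ through the swaps of $T_j$. Let $t_1<t_2<\ldots<t_r$ be the indices of the steps that move $V$. If $r=0$, then $V$ remains at position $i$. Otherwise the first such step must involve position $i$, and since every second coordinate exceeds $\lambda'_j\geq i$, we have $a_{t_1}=i$, sending $V$ to position $b_{t_1}$. Subsequent moves alternate $V$ between positions $>\lambda'_j$ and positions $\leq\lambda'_j$, and the $\prec$-order forces the first coordinate at which $V$ lands to strictly increase at each return to a position $\leq\lambda'_j$: if $V$ is at position $b_{t_{s-1}}$ before step $t_s$, then one must have $b_{t_s}=b_{t_{s-1}}$, so equal first coordinates $a_{t_s}=a_{t_{s-1}}$ are ruled out under $\prec$ (which would demand strictly decreasing second coordinates), forcing $a_{t_s}>a_{t_{s-1}}$. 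Consequently the final position of $V$ is always $\geq i$: it equals $i$ when $r=0$, lies in the range $(i,\lambda'_j]$ when $r\geq 2$ is even, or exceeds $\lambda'_j\geq i$ when $r$ is odd. This rules out $V=\pi_{j-1}(k)$ for $k<i$.

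For surjectivity under the assumption that $\lambda$ has $n-1$ distinct non-zero parts, first note that $W_\lambda$ is trivial (so $S_n^\lambda=S_n$) and that the conjugate partition satisfies $\lambda'_{j-1}-\lambda'_j\in\{0,1\}$ for every $j$. Given $\sigma\in\F(\lambda,n)$, I would construct an admissible pair $(w,T)$ mapping to $\sigma$ by building the permutations $\pi_j$ column by column. The first $\lambda'_j$ entries of $\pi_j$ are forced to be column $j$ of $\sigma$; I would choose the remaining entries so that $\pi_j^{-1}\pi_{j-1}$ factors as a $\prec$-ordered sequence of Bruhat-descent transpositions $(a,b)$ with $a\leq\lambda'_j<b$, by setting $\pi_j(i)=\pi_{j-1}(i)$ at positions $i>\lambda'_j$ that are untouched by the intended swaps and placing the displaced values from the top of $\pi_{j-1}$ at the remaining positions. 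Setting $w:=\pi_{\lambda_1}$ and stacking these sequences yields the candidate admissible pair.

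The principal obstacle is verifying that this factorization of $T_j$ is well-defined and is indeed a Bruhat-descending $\prec$-chain. The distinct-parts hypothesis is essential here: the constraint $\lambda'_{j-1}-\lambda'_j\leq 1$ tightly controls how the value sets of consecutive columns differ, the weakly-decreasing-rows condition on $\sigma$ ensures that values moved down from a position $\leq\lambda'_j$ are strictly larger than those they displace (which provides each Bruhat descent), and the attacking-cells condition prevents the order conflicts that would obstruct the required $\prec$-ordering of the transpositions.
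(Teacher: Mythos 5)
Your proof is correct and follows essentially the same route as the paper: the same trajectory-of-a-value argument (based on the structure of $\Gamma_j$ and the $\prec$-order) for attacking cells in consecutive columns, and the same greedy column-by-column lifting of $\sigma$ to an increasing Bruhat chain for surjectivity. One small correction to your closing remarks: the distinct-parts hypothesis enters only through $S_n^\lambda=S_n$ (which you do note at the outset), since the factorization of each $T_j$ as a $\prec$-ordered sequence of Bruhat descents goes through for arbitrary $\lambda$ using just the filling conditions; the constraint $\lambda'_{j-1}-\lambda'_j\le 1$ is not where the hypothesis is actually used.
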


\begin{proof}
 Let $u=(i,j)$ be a cell of $\lambda$, and $\sigma=f(w,T)$. We check that $\sigma$ satisfies the two conditions in Definition \ref{deff}. If $j\ne 1$, then $\sigma(u)\ge\sigma(\rt(u))$ by the decreasing chain condition in (\ref{decch}). Now let $v=(k,j)$ with $k>i$. We clearly have $\sigma(u)\ne\sigma(v)$ because $\sigma(u)=\pi_j(i)$ and $\sigma(v)=\pi_j(k)$. For the same reason, if $\sigma(u)=\sigma(\rt(u))$, then $\sigma(v)\ne\sigma(\rt(u))$. Otherwise, consider the subchain of the Bruhat chain corresponding to $(w,T)$ which starts at $\pi_j$ and ends at $\pi_{j-1}$. There is a permutation $\pi$ in this subchain such that $\sigma(v)=\pi(k)$ and $\sigma(\rt(u))$ is the entry in some position greater than $k$, to be swapped with position $i$; this follows from the structure of the segment $\Gamma_j$ (see (\ref{omegakchain})) in the $\lambda$-chain $\Gamma$. Thus, we have $\sigma(v)\ne\sigma(\rt(u))$ once again. We conclude that $\sigma\in\F(\lambda,n)$. 

Now consider $\lambda$ with $n-1$ distinct non-zero parts, and $\sigma\in\F(\lambda,n)$. We construct an increasing chain in the Bruhat order on $S_n$, as follows. Let $\pi_1$ be some permutation such that $\pi_1(i)=\sigma(i,1)$ for $1\le i\le\lambda_1'$. For each $i$ from $\lambda_2'$ down to 1, if $\sigma(i,2)\ne\sigma(i,1)$, then swap the entry in position $i$ with the entry $\sigma(i,2)$; the latter is always found in a position greater than $\lambda_2'$ because $\sigma\in\F(\lambda,n)$. The result is a permutation $\pi_2$ whose first $\lambda_2'$ entries form the second column of $\sigma$. Continue in this way, by moving to columns $3,\ldots,\lambda_1$. Then set $w:=\pi_{\lambda_1}$. The constructed Bruhat chain determines an admissible pair $(w,T)$ mapped to $\sigma$ by $f$. 
\end{proof}

Based on Proposition \ref{surjmap}, from now on we consider the filling map as a map $f\::\:{\mathcal A}(\lambda)\rightarrow\F(\lambda,n)$. 

\subsection{Compressing Schwer's formula} In this section we assume that the partition $\lambda$  has $n-1$ distinct non-zero parts, i.e., corresponds to a regular weight in the root system $A_{n-1}$. In this case $S_n^\lambda=S_n$, and thus the pairs $(w,J)$ in ${\mathcal A}(\lambda)$ are only subject to the decreasing chain condition in (\ref{decch}); this fact is implicitly used in the proof of the theorem below. 

We start by recalling Ram's version of Schwer's formula (\ref{hlpform}) and the HHL formula (\ref{hlqform}):
\begin{align*}
&P_{\lambda}(X;t)=\sum_{(w,T)\in{\mathcal A}(\Gamma)}t^{\frac{1}{2}(\ell(w)+\ell(wT)-|T|)}\,(1-t)^{|T|}\,x^{w(\mu(T))}\,,\\
&P_{\lambda}(X;t) = \sum_{\sigma\in\mathcal{F}(\lambda,n)}
 t^{\cinv (\sigma )}\,(1-t)^{\des(\sigma)}\: x^{{\rm ct}(\sigma) } \,.
\end{align*}

We will now describe the way in which the second formula can be obtained by compressing the first one.

\begin{theorem}\label{mainthm}
Given $\lambda$ with $n-1$ distinct non-zero parts and any $\sigma \in{\mathcal F}(\lambda,n)$, we have $f^{-1}(\sigma)\ne\emptyset$ and $x^{w(\mu(T))}=x^{{\rm ct}(\sigma)}$ for all $(w,T)\in f^{-1}(\sigma)$. Furthermore, we have
\begin{equation}\label{compress}\sum_{(w,T)\in f^{-1}(\sigma)}t^{\frac{1}{2}(\ell(w)+\ell(wT)-|T|)}\,(1-t)^{|T|}= t^{\cinv (\sigma )}\, (1-t)^{\des(\sigma)}\,.\end{equation}
\end{theorem}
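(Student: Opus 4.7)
The first two claims are immediate: Proposition~\ref{surjmap} gives $f^{-1}(\sigma)\ne\emptyset$, while Proposition~\ref{weightmon} yields $\ct(f(w,T))=w(\mu(T))$, hence $x^{w(\mu(T))}=x^{\ct(\sigma)}$ for every $(w,T)\in f^{-1}(\sigma)$. This reduces the theorem to the compression identity~\eqref{compress}.

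My plan is to parametrize $f^{-1}(\sigma)$ column by column and then factor the summand cell by cell. The condition $f(w,T)=\sigma$ forces $\pi_j(i)=\sigma(i,j)$ for $1\le i\le\lambda_j'$, so the top $\lambda_j'$ entries of each intermediate permutation $\pi_j=wT_{\lambda_1}\cdots T_{j+1}$ are pinned down by $\sigma$. The free data are the bottom of $\pi_{\lambda_1}=w$ and, for each $j=\lambda_1,\ldots,1$, a subsequence $T_j\subseteq\Gamma_j$ giving a Bruhat-decreasing chain from $\pi_j$ to $\pi_{j-1}$, subject (for $j\ge 2$) to $\pi_{j-1}$ having column $j-1$ of $\sigma$ as its top. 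Setting $d_j:=\ell(\pi_j)-\ell(\pi_{j-1})$ and $s_j:=|T_j|$, I rewrite the exponent as
\[\tfrac{1}{2}\bigl(\ell(w)+\ell(wT)-|T|\bigr)=\ell(\pi_0)+\tfrac{1}{2}\sum_{j=1}^{\lambda_1}(d_j-s_j),\]
so each factor $(1-t)^{s_j}$ pairs with $t^{(d_j-s_j)/2}$ to give a per-column weight.

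Within the step $j\to j-1$, the transpositions of $\Gamma_j=\Gamma(\lambda_j')$ partition naturally into sub-blocks indexed by the first coordinate $a\in[\lambda_j']$, namely $((a,n),(a,n-1),\ldots,(a,\lambda_j'+1))$. A nonempty intersection $T_j\cap\{a\text{-sub-block}\}=((a,b_1),\ldots,(a,b_p))$ with $n\ge b_1>\cdots>b_p>\lambda_j'$ effects a cyclic rotation of the values at positions $a,b_1,\ldots,b_p$, and the Bruhat-decreasing requirement becomes a chain of strict inequalities on the corresponding $\pi_j$-values. Whether this sub-block is nonempty is forced by $\sigma$: it is empty when $\sigma(a,j-1)=\sigma(a,j)$ and nonempty---contributing one factor of $(1-t)$---precisely when $(a,j)$ is a descent.

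The final and most delicate step is a cell-by-cell matching. For each cell $(a,j)$, summing over the valid sub-block choices $(b_1,\ldots,b_p)$ and the permissible bottom-entry configurations they induce should yield exactly $(1-t)^{\delta_{a,j}}\,t^{c_{a,j}}$, where $\delta_{a,j}\in\{0,1\}$ is the descent indicator at $(a,j)$ and $c_{a,j}$ is the contribution of $(a,j)$ to $\cinv$ as described in the proposition after Theorem~\ref{hlqthm}. The triple inequality $\sigma(a,j)<\sigma(v)<\sigma(a,j-1)$ defining a cinv pair should correspond precisely to the intermediate $b_i$-choices giving a non-cover swap (i.e., a length drop exceeding $1$) in the cyclic rotation, each such nested value contributing an excess of $2$ to $d_j-s_j$, hence one factor of $t$ in the per-column weight. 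The outer factor $t^{\ell(\pi_0)}$ and the free sum over $\pi_{\lambda_1}^{\text{bot}}$ should absorb into the contribution of the leftmost column, where the absence of a left neighbor removes the upper inequality in the cinv condition. The principal obstacle is this last matching---establishing the bijection between Bruhat-descending cyclic-rotation data and cinv triples, and verifying that the boundary behaviors at column $1$ and at singleton or empty sub-blocks combine correctly.
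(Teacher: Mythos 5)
Your treatment of the first two claims is correct and matches the paper: they follow from Propositions \ref{surjmap} and \ref{weightmon}. The reductions you then perform are also sound as far as they go: the rewriting of the exponent as $\ell(\pi_0)+\frac{1}{2}\sum_j(d_j-s_j)$ is the content of Lemma \ref{split} (via the identity $\frac{1}{2}(d_j-s_j)=N(\pi_j,T_j)$ of (\ref{tailb})), and the decomposition of each $\Gamma_j$ into sub-blocks by first coordinate, the cyclic-rotation description of a sub-block's action, and the correspondence between nonempty sub-blocks and descents are all correct and form the skeleton of the paper's own argument.

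However, the proposal stops precisely where the theorem begins. The assertion that for each cell the sum over admissible sub-block data evaluates to $(1-t)^{\delta_{a,j}}t^{c_{a,j}}$ --- and, just as importantly, that this quantity does not depend on the unconstrained bottom entries of the intermediate permutations, so that the global sum genuinely factors over cells --- is the entire content of (\ref{compress}); you flag it yourself as ``the principal obstacle'' and argue only that it ``should'' hold. In the paper this occupies three separate inductions (Propositions \ref{inv2}, \ref{p2cols0}, \ref{twocol}) plus the assembly in Proposition \ref{goleft}, and the identification of the accumulated $t$-exponent with $\cinv$ is not cell-local in the naive sense: it requires the identity $\sum_i N_{C_1(i),C_2(i)}(C_2[i+1,k])=\ell(C_2)-\ell(C_1)+\cinv(C_2C_1)$, whose proof uses both conditions of Definition \ref{deff}. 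There is also a bookkeeping slip in your outline: $\ell(\pi_0)=\ell(wT)$ is governed by the \emph{rightmost} column --- it combines with the free sum over the tail segment $T_1$ to give $t^{\ell(C)}$ with $C$ the rightmost column (Proposition \ref{inv2}; this is also where the hypothesis $\lambda_1'=n-1$ is actually used, to pin down $\pi_1$ completely and make the sum factor) --- whereas the unconstrained $\cinv$ contribution $\ell(C_{\lambda_1})$ of the leftmost column only emerges after the telescoping in Proposition \ref{goleft}; absorbing $t^{\ell(\pi_0)}$ into the leftmost column does not close as stated. So the outline is the right one, but the core of the proof is missing.
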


The first statement in the theorem is just the content of Propositions \ref{weightmon} and \ref{surjmap}. The compression formula (\ref{compress}) will be proved Section \ref{pfmain}.

The case of an arbitrary partition $\lambda$ (with at most $n-1$ parts) is considered in the Appendix (Section \ref{al}). 

In order to measure the compression phenomenon, we define the {\em compression factor} $c(\lambda)$ as the ratio of the number of terms in Ram's version of Schwer's formula for $\lambda$ and the number of terms $t(\lambda)$ in the HHL formula above. We list below some examples, which show that the compression factor increases sharply with the number of variables $n$. 

\begin{equation*}
\begin{tabular}{|c|c|r|r|}
\hline
$\lambda$ &  $n$ & $t(\lambda)$ & $c(\lambda)$ 
 \\ \hline
(4, 2, 1) & $4$ & 366 & 2.9 
 \\ \hline
(4, 2, 1) & $5$ & 1,869  & 9.0 
 \\ \hline
(4, 2, 1) & $6$ & 6,832  &  31.3
 \\ \hline
(4, 3, 2, 1) & $5$ & 8,896  &  4.1
 \\ \hline
(4, 3, 2, 1) & $6$ & 75,960  & 17.7
 \\ \hline
\end{tabular}
\end{equation*}

\section{The proof of Proposition \ref{weightmon}}\label{pfweight}

The weight/partition $\lambda$ is assumed to be an arbitrary one in this section. Recall the $\lambda$-chain $\Gamma$ in Section \ref{specschwer}. Let us write $\Gamma=(\beta_1,\ldots,\beta_m)$, as in Section \ref{alcovewalks}. As such, we recall the hyperplanes $H_{\beta_k,l_k}$ and the corresponding affine reflections $\widehat{r}_k=s_{\beta_k,l_k}$. If $\beta_k=(a,b)$ falls in the segment $\Gamma_p$ of $\Gamma$ (upon the factorization $\Gamma=\Gamma_{\lambda_1}\ldots\Gamma_1$ of the latter), then it is not hard to see that
\[l_k=|\{i\::\:p\le i\le\lambda_1,\:\lambda_i'\ge a\}|\,.\]
An affine reflection $s_{(a,b),l}$ acts on our vector space $V$ by
\begin{equation}\label{affact}s_{(a,b),l}(\mu_1,\ldots,\mu_a,\ldots,\mu_b,\ldots,\mu_n):=(\mu_1,\ldots,\mu_b+l,\ldots,\mu_a-l,\ldots,\mu_n)\,.\end{equation}

Now fix a permutation $w$ in $S_n$ and a subset $J=\{j_1<\ldots<j_s\}$ of $[m]$ (not necessarily $w$-admissible).  Let $\Pi$ be the alcove path corresponding to $\Gamma$, and define the alcove walk $\Omega$ as in Section \ref{alcovewalks}, by
\[\Omega:=\phi_{j_1}\ldots \phi_{j_s} (w(\Pi))\,.\]
Given $k$ in $[m]$, let $i=i(k)$ be the largest index in $[s]$ for which $j_i<k$. Let $\gamma_k:=wr_{j_1}\ldots r_{j_i}(\beta_k)$, and let $H_{\gamma_k,m_k}$ be the hyperplane containing the face $F_k$ of $\Omega$ (cf. Definition \ref{defalcwalk}). In other words
\[H_{\gamma_k,m_k}=w\widehat{r}_{j_1}\ldots \widehat{r}_{j_i}(H_{\beta_k,l_k})\,.\]
Our first goal is to describe $m_k$ purely in terms of the filling associated to $(w,J)$.

Let $\widehat{t}_k$ be the affine reflection in the hyperplane $H_{\gamma_k,m_k}$. Note that
\[\widehat{t}_k=w\widehat{r}_{j_1}\ldots \widehat{r}_{j_i}\widehat{r}_k\widehat{r}_{j_i}\ldots \widehat{r}_{j_1}w^{-1}\,.\]
Thus, we can see that
\[w\widehat{r}_{j_1}\ldots \widehat{r}_{j_i}=\widehat{t}_{j_i}\ldots \widehat{t}_{j_1}w\,.\]

Let $T=((a_1,b_1),\ldots,(a_s,b_s))$ be the subsequence of $\Gamma$ indexed by the positions in $J$, cf. Section \ref{specschwer}. Let $T^i$ be the initial segment of $T$ with length $i$, let $w_i:=wT^i$, and $\sigma_i:=f(w,T^i)$. In particular, $\sigma_0$ is the filling with all entries in row $i$ equal to $w(i)$, and  $\sigma:=\sigma_s=f(w,T)$. The columns of a filling of $\lambda$ are numbered, as usual, from left to right by $\lambda_1$ to 1. Note that, if $\beta_{j_{i+1}}=(a_{i+1},b_{i+1})$ falls in the segment $\Gamma_p$ of $\Gamma$, then $\sigma_{i+1}$ is obtained from $\sigma_i$ by replacing the entry $w_i(a_{i+1})$ with $w_i(b_{i+1})$ in the columns $p-1,\ldots,1$ (and the row $a_{i+1}$) of $\sigma_i$. 

Given a fixed $k$, let $\beta_k=(a,b)$, $c:=w_i(a)$, and $d:=w_i(b)$, where $i=i(k)$ is defined as above. Then $\gamma:=\gamma_k=(c,d)$, where we might have $c>d$. Let $\Gamma_q$ be the segment of $\Gamma$ where $\beta_k$ falls. Given a filling $\phi$, we denote by $\phi[p]$ and $\phi(p,q]$ the parts of $\phi$ consisting of the columns $\lambda_1,\lambda_1-1,\ldots,p$ and $p-1,p-2,\ldots,q$, respectively. We use the notation $N_{e}(\phi)$ to denote the number of entries $e$ in the filling $\phi$. 

\begin{proposition}\label{level} With the above notation, we have
\[m_k=\langle{\rm ct}(\sigma[q]),\gamma^\vee\rangle=N_c(\sigma[q])-N_d(\sigma[q])\,.\]
\end{proposition}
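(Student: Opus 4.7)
The plan is to prove the stronger statement $m_k = N_c(\sigma_i[q]) - N_d(\sigma_i[q])$ with $i = i(k)$, and deduce the proposition from it. This reduction will use the observation that every folding position $j_{i+1}, \ldots, j_s$ lies in a segment $\Gamma_p$ with $p \le q$ (since $\beta_k \in \Gamma_q$), so the corresponding right-multiplications touch only columns $1, \ldots, p-1$ of the filling, leaving the columns indexed by $q, q+1, \ldots, \lambda_1$ unchanged past step $i$. Hence $\sigma[q] = \sigma_i[q]$, and I would then induct on $s = |J|$.

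For the base case $s = 0$, one has $\gamma_k = w(\beta_k) = \varepsilon_{w(a)} - \varepsilon_{w(b)}$ and $m_k = l_k$, while $\sigma_0$ is row-constant with value $w(\ell)$ in row $\ell$. Thus $c = w(a)$ appears in $\sigma_0[q]$ precisely in row $a$ of those columns $j \ge q$ with $\lambda_j' \ge a$, and $d = w(b)$ cannot appear since $\beta_k \in \Gamma_q$ forces $b > \lambda_q' \ge \lambda_j'$ for $j \ge q$. This immediately gives $N_c(\sigma_0[q]) - N_d(\sigma_0[q]) = l_k$.

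For the inductive step, I would strip off the last folding $j_s$ to obtain $J' = J \setminus \{j_s\}$, with filling $\sigma' = f(w, T')$ and values $c' = w_{s-1}(a)$, $d' = w_{s-1}(b)$, $e = w_{s-1}(a_s)$, $f = w_{s-1}(b_s)$. Positions $k \le j_s$ will inherit the formula from $J'$ intact. For $k > j_s$, two preparatory facts will be needed. First, because $T^{s-1}_{p_s - 1} = \ldots = T^{s-1}_1 = \emptyset$, the product $\pi_j^{(s-1)}$ collapses to $w_{s-1}$ for every $j \le p_s - 1$; hence in $\sigma'$, each column $j \in [q, p_s - 1]$ carries entry $e$ in row $a_s$ and, if present, $f$ in row $b_s$. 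Letting $r$ denote the number of such columns with $\lambda_j' \ge b_s$, the passage from $\sigma'$ to $\sigma$ alters only $N_e$ and $N_f$ on $\sigma[q]$, by $-((p_s - q) - r)$ and $+((p_s - q) - r)$ respectively. Second, the inductive hypothesis applied at position $j_s$ in the walk $J'$ gives $m_{j_s} = N_e(\sigma'[p_s]) - N_f(\sigma'[p_s])$, which combined with the splittings $N_e(\sigma'[q]) = N_e(\sigma'[p_s]) + (p_s - q)$ and $N_f(\sigma'[q]) = N_f(\sigma'[p_s]) + r$ yields the crucial identity $N_e(\sigma'[q]) - N_f(\sigma'[q]) = m_{j_s} + (p_s - q - r)$.

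The level change $m_k^J - m_k^{J'}$ is then computed by applying $\widehat{t}_{j_s} = s_{\gamma_{j_s}, m_{j_s}}$ to $H_{\gamma_k^{J'}, m_k^{J'}}$ via the explicit formula (\ref{affact}); it will equal $0$, $\pm m_{j_s}$, or $-2 m_{j_s}$ according to the six possibilities for $\{a, b\} \cap \{a_s, b_s\}$ (the $-2$ case arising only when $(a, b) = (a_s, b_s)$). The hardest part will be the parallel six-case verification that the combinatorial change $(N_c(\sigma[q]) - N_d(\sigma[q])) - (N_{c'}(\sigma'[q]) - N_{d'}(\sigma'[q]))$ matches the level change in each subcase; the identity above will cancel the $(p_s - q - r)$ padding introduced by the swap, leaving precisely $\pm m_{j_s}$ or $-2 m_{j_s}$ as required.
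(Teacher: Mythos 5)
Your outline is correct, and I checked that the deferred case analysis does close up: in every case the discrepancy between the combinatorial change in $N_c(\cdot[q])-N_d(\cdot[q])$ and the level change $-m_{j_s}\langle \gamma_{j_s},\gamma^\vee\rangle$ is exactly absorbed by your identity $N_e(\sigma'[q])-N_f(\sigma'[q])=m_{j_s}+(p_s-q-r)$. The induction itself is the same as the paper's (the paper fixes $k$ and adds the foldings $j_1<\dots<j_{i(k)}$ one at a time, which is your peeling off of $j_s$ together with the observation that foldings at positions $\ge k$ only disturb columns strictly to the right of column $q$). Where you genuinely diverge is in the verification of the inductive step. You work with the scalar $N_c-N_d$ and split into cases according to $\{a,b\}\cap\{a_s,b_s\}$; the paper instead carries the full content vector, states the inductive hypothesis as $m_k=\langle{\rm ct}(\sigma_i[q]),\gamma^\vee\rangle$, and then the reflection formula $s_{\gamma',m'}(H_{\gamma,m})=H_{s_{\gamma'}(\gamma),\,m-m'\langle\gamma',\gamma^\vee\rangle}$ reduces the entire step to the single case-free identity ${\rm ct}(\sigma_{i+1}(p,q])=s_{\gamma'}({\rm ct}(\sigma_i(p,q]))$. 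That identity is precisely your first preparatory fact in vector form: your observation that the update is a swap of rows $a_s$ and $b_s$ where both exist, so that only the $(p_s-q)-r$ columns lacking row $b_s$ move an $e$ to an $f$, is exactly what makes the content vector transform by $s_{\gamma'}$ (and is a more accurate description of the filling update than the paper's informal one). So the paper's route buys a one-line inductive step at the cost of the slightly more abstract inner-product bookkeeping, while yours is more elementary but requires writing out five or six parallel verifications (note that the case $b=a_s$ is vacuous, since $a_s\le\lambda_{p_s}'\le\lambda_q'<b$, and that the level change $+2m_{j_s}$ cannot occur); if you write the cases out, your argument is complete.
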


\begin{proof}  We apply induction on $i$, which starts at $i=0$. We will now proceed from $j_1<\ldots<j_{i}<k$, where $i=s$ or $k\le j_{i+1}$, to $j_1<\ldots<j_{i+1}<k$, and we will freely use the notation above. Let
\[\beta_{j_{i+1}}=(a',b')\,,\;\; c':=w_i(a')\,,\;\;d':=w_i(b')\,.\]
Let $\Gamma_p$ be the segment of $\Gamma$ where $\beta_{j_{i+1}}$ falls, where $p\ge q$. 

We need to compute
\[w\widehat{r}_{j_1}\ldots \widehat{r}_{j_{i+1}}(H_{\beta_k,l_k})=\widehat{t}_{j_{i+1}}\ldots \widehat{t}_{j_1}w(H_{\beta_k,l_k})=\widehat{t}_{j_{i+1}}(H_{\gamma,m})\,,\]
where $m=\langle{\rm ct}(\sigma_i[q]),\gamma^\vee\rangle$, by induction. Note that $\gamma':=\gamma_{j_{i+1}}=(c',d')$, and $\widehat{t}_{j_{i+1}}=s_{\gamma',m'}$, where $m'=\langle{\rm ct}(\sigma_i[p]),(\gamma')^\vee\rangle$, by induction. We will use the following formula:
\[s_{\gamma',m'}(H_{\gamma,m})=H_{s_{\gamma'}(\gamma),m-m'\langle\gamma',\gamma^\vee\rangle}\,.\]
Thus, the proof is reduced to showing that
\[m-m'\langle\gamma',\gamma^\vee\rangle=\langle{\rm ct}(\sigma_{i+1}[q]),s_{\gamma'}(\gamma^\vee)\rangle\,.\] 
An easy calculation, based on the above information, shows that the latter equality is non-trivial only if $p>q$, in which case it is equivalent to
\begin{equation}\label{indlev}\langle{\rm ct}(\sigma_{i+1}(p,q])-{\rm ct}(\sigma_{i}(p,q]),\gamma^\vee\rangle=\langle\gamma',\gamma^\vee\rangle\,\langle{\rm ct}(\sigma_{i+1}(p,q]),(\gamma')^\vee\rangle\,.\end{equation}
This equality is a consequence of the fact that
\[{\rm ct}(\sigma_{i+1}(p,q])=s_{\gamma'}({\rm ct}(\sigma_{i}(p,q]))\,,\]
which follows from the construction of $\sigma_{i+1}$ from $\sigma_i$ explained above.
\end{proof}

\begin{proof}[Proof of Proposition {\rm \ref{weightmon}}] We apply induction on the size of $T$, using freely the notation above. We prove the statement for $T=((a_1,b_1),\ldots,(a_{s+1},b_{s+1}))$, assuming it holds for $T^s=((a_1,b_1),\ldots,(a_{s},b_{s}))$. We have
\[w(\mu(T)=w\widehat{r}_{j_1}\ldots \widehat{r}_{j_{s+1}}(\lambda)=\widehat{t}_{j_{s+1}}\ldots \widehat{t}_{j_1}w(\lambda)=\widehat{t}_{j_{s+1}}({\rm ct}(\sigma_s))\,,\]
by induction. Let $c:=w_s(a_{s+1})$, $d:=w_s(b_{s+1})$, and assume that $(a_{s+1},b_{s+1})$ falls in the segment of $\Gamma$ corresponding to column $p$ of $\lambda$. Then $\widehat{t}_{j_{s+1}}=s_{(c,d),m}$, where 
\[m=N_{c}(\sigma_s[p])-N_{d}(\sigma_s[p])\,,\]
by Proposition \ref{level}. Thus, the proof is reduced to
\[s_{(c,d),m}(N_1(\sigma_{s}),\ldots,N_{n}(\sigma_{s}))=(N_{1}(\sigma_{s+1}),\ldots,N_{n}(\sigma_{s+1}))\,.\]
This follows easily from  (\ref{affact}) and the following equalities:
\begin{align*}&N_{c}(\sigma_{s+1})-N_{c}(\sigma_s[p])=|\{j\::\:\lambda_j'\ge b_{s+1}\}|=N_{d}(\sigma_{s})-N_{d}(\sigma_s[p])\,,\\
&N_{d}(\sigma_{s+1})-N_{d}(\sigma_s[p])=p-1=N_{c}(\sigma_{s})-N_{c}(\sigma_s[p])\,.\end{align*}
\end{proof}

\section{The proof of Theorem \ref{mainthm}}\label{pfmain}

The weight/partition $\lambda$ is assumed to be an arbitrary one in this section, unless otherwise specified. We start with some notation related to sequences of positive integers. Given such a sequence $w$, we write $w[i,j]$ for the subsequence $w(i)w(i+1)\ldots w(j)$. We use the notation $N_a(w)$ and $N_{ab}(w)$ for the number of entries $w(i)$ with $w(i)<a$ and $a<w(i)<b$, respectively.  Given a sequence of transpositions $T=((a_1,b_1),\ldots,(a_p,b_p))$ and a permutation $w$, we define
\begin{equation}\label{defnwt}N(w,T):=\sum_{i=1}^p N_{c_id_i}(w_i[a_i,b_i])\,,\end{equation}
where 
\[w_i=w(a_1,b_1)\ldots(a_i,b_i)\,,\;\;\;c_i:=\min(w_i(a_i),w_i(b_i))\,,\;\;\;d_i:=\max(w_i(a_i),w_i(b_i))\,.\]
We extend the notion of length $\ell(\,\cdot\,)$ for permutations to any sequence $w$ by counting inversions, which are defined in the same way as for permutations. All the formulas below and the relevant notions are tacitly extended, in the obvious way, to permutations of some fixed set of $n$ positive integers.

\begin{lemma}\label{split}
Consider $(w,T)\in{\mathcal A}(\Gamma)$ with $T$ written as a concatenation $S_1\ldots S_p$. Let $w_i:=wS_1\ldots S_i$, so $w_0=w$. Then
\[\frac{1}{2}(\ell(w)+\ell(wT)-|T|)=\frac{1}{2}(\ell(wS_1\ldots S_{p-1})+\ell(wT)-|S_p|)+\sum_{i=1}^{p-1}N(w_{i-1},S_i)\,.\]
\end{lemma}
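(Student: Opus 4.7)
The plan is to reduce the lemma, by a double telescoping, to a single well-known identity on how one transposition changes the length of a permutation. Clearing the factor $\frac{1}{2}$ from the claimed equality, and using $|T| - |S_p| = \sum_{i=1}^{p-1} |S_i|$ together with $\ell(w) - \ell(w_{p-1}) = \sum_{i=1}^{p-1} (\ell(w_{i-1}) - \ell(w_i))$, I see that it suffices to establish the per-factor identity
\[\ell(w_{i-1}) - \ell(w_i) - |S_i| \;=\; 2\,N(w_{i-1}, S_i) \qquad (i = 1, \ldots, p-1)\,.\]

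Next I would further telescope within $S_i = ((a_1,b_1), \ldots, (a_q,b_q))$ by setting $v_k := w_{i-1}(a_1,b_1) \ldots (a_k,b_k)$. The admissibility hypothesis $(w,T) \in \mathcal{A}(\Gamma)$, combined with Proposition \ref{admpairs}, guarantees $\ell(v_{k-1}) > \ell(v_k)$, equivalently $v_{k-1}(a_k) > v_{k-1}(b_k)$, at every step. The task then reduces to the single-transposition identity: for any permutation $u$ with $a<b$ and $u(a) > u(b)$, setting $c := u(b)$ and $d := u(a)$,
\[\ell(u) - \ell(u(a,b)) \;=\; 1 + 2\,N_{cd}(u[a,b])\,.\]
This I would verify by a routine inversion count. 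Swapping positions $a$ and $b$ flips the status of the pair $(a,b)$ itself (contributing $-1$); pairs involving some $k < a$ or $k > b$ give net contribution $0$, since the inversion merely migrates between $(k,a)$ and $(k,b)$ (respectively $(a,k)$ and $(b,k)$); and for each $a < k < b$, the pairs $(a,k)$ and $(k,b)$ flip status simultaneously, contributing $-2$ precisely when $c < u(k) < d$, and $0$ otherwise.

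Finally, I would reconcile definition \eqref{defnwt}, which evaluates $N_{c_k d_k}$ on the subsequence $v_k[a_k, b_k]$ \emph{after} the $k$-th swap, with the single-transposition count above, which reads the subsequence \emph{before} the swap. The two agree because the interior entries of the window are unaffected by swapping its endpoints, and the two endpoints in $v_k$ are precisely $c_k$ and $d_k$, which lie outside the strict open interval $(c_k, d_k)$. Summing the single-transposition identity over $k = 1, \ldots, q$ then reproduces $2\,N(w_{i-1}, S_i)$ on the nose, completing the reduction. I do not anticipate any real obstacle in this argument; the only thing to watch carefully is the before/after bookkeeping between the chain of permutations $v_0, v_1, \ldots, v_q$ and the indexing convention used in \eqref{defnwt}.
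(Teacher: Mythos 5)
Your proof is correct and follows essentially the same route as the paper: after clearing the factor $\frac{1}{2}$, both arguments telescope the identity down to the single relation $\frac{1}{2}(\ell(u)-\ell(uS)-|S|)=N(u,S)$ for one factor $S$, which the paper simply asserts (its equation (\ref{tailb})) and which you verify in full via the standard inversion count for $\ell(u)-\ell(u(a,b))$. Your before/after bookkeeping for the window $[a_k,b_k]$ correctly reconciles your computation with the convention of (\ref{defnwt}), so there is no gap.
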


\begin{proof}It is enough to prove the lemma for $p=2$, since the function $N(u,S)$ has the obvious additivity property. For $p=2$, we simply note that
\begin{equation}\label{tailb}\frac{1}{2}(\ell(w)-\ell(wS_1)-|S_1|)=N(w,S_1)\,.\end{equation}
\end{proof}

Given the $\omega_k$-chain $\Gamma(k)$ in (\ref{omegakchain}) and $0\le p\le n-k$, let $\Gamma(k,p)$ denote the segment of $\Gamma(k)$ obtained by removing its first $p$ entries. We define ${\mathcal A}(\Gamma(k,p))$ like in (\ref{decch}), except that $w\in W$. 

\begin{proposition}\label{inv2} Fix $k,p$ such that $1\le k\le n-p\le n$. Consider a permutation $w$ in $S_n$ given as a concatenation $w_1w_2w_3$, where $w_1$ and $w_3$ are sequences of size $k$ and $p$, respectively.  We have
\begin{equation}\label{comp1}\sum_{T\::\:(w,T)\in{\mathcal A}(\Gamma(k,p))}t^{\frac{1}{2}(\ell(w)+\ell(wT)-|T|)}\,(1-t)^{|T|}=t^{\ell(w_1)+\ell(w_2w_3)+N_{w(1)}(w_3)}\,.\end{equation}
In particular, if $p=0$, and thus $w=w_1w_2$, the above sum is just $t^{\ell(w_1)+\ell(w_2)}$.
\end{proposition}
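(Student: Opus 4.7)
The plan is to prove the identity by a double induction: outer on $n$, inner on the chain length $L := k(n-k) - p \geq 0$. For the base case $L = 0$, the only possibilities are $k = 1$, $p = n-1$ or $k = n$, $p = 0$; in both, $\Gamma(k,p)$ is empty, only $T = \emptyset$ contributes, and both sides equal $t^{\ell(w)}$.

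For the inner step with $L > 0$ and $p < n-k$, I would observe that the leading pair of $\Gamma(k, p)$ is $(1, n-p)$ and partition the admissible $T$'s by whether $(1, n-p) \in T$. Sequences not containing it are exactly the ones admissible for $\Gamma(k, p+1)$ at $w$; their sum is $\mathrm{RHS}(p+1; w)$ by the inner IH. Sequences containing it require $w(1) > w(n-p)$; writing $T = ((1, n-p)) \cdot T'$ and applying (\ref{tailb}) factors their sum as $(1-t)\, t^{N(w,\,((1, n-p)))}\, \mathrm{RHS}(p+1; w')$, where $w' := w \cdot (1, n-p)$.

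Setting $a := w(1)$, $b := w(n-p)$, and decomposing $w = a \cdot u \cdot v \cdot b \cdot r$ with $u := w[2, k]$, $v := w[k+1, n-p-1]$, $r := w[n-p+1, n]$, three direct computations will give $N(w, ((1, n-p))) = N_{b,a}(u) + N_{b,a}(v)$, $\mathrm{RHS}(p+1; w) = \mathrm{RHS}(p; w) \cdot t^{[b < a]}$, and, when $a > b$, $\mathrm{RHS}(p+1; w') = \mathrm{RHS}(p; w) \cdot t^{-N_{b,a}(u) - N_{b,a}(v)}$. Substituting these in: when $a > b$ the two contributions recombine as $\mathrm{RHS}(p; w) \cdot (t + (1 - t)) = \mathrm{RHS}(p; w)$, and when $a < b$ only the first contributes, again giving $\mathrm{RHS}(p; w)$.

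The remaining case $p = n-k$ (with $k \geq 2$) will be handled via the outer IH. Since every transposition in $\Gamma(k, n-k)$ has first index $\geq 2$, position $1$ is fixed throughout, so inversions involving position $1$ contribute a uniform factor $t^{N_{w(1)}(w[2, n])}$ and the remaining sum is the LHS for $(n-1, k-1, 0)$ applied to $w[2, n]$ viewed as an element of $S_{n-1}$. The outer IH evaluates this as $t^{\ell(w[2, k]) + \ell(w[k+1, n])}$, and the identities $\ell(w_1) = \ell(w[2, k]) + N_{w(1)}(w[2, k])$ together with $N_{w(1)}(w[2, n]) = N_{w(1)}(w[2, k]) + N_{w(1)}(w[k+1, n])$ will reassemble this product into $\mathrm{RHS}(n-k; w)$. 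The main obstacle is the exponent bookkeeping in the $p < n-k$ case: the three powers of $t$ arising from (\ref{tailb}) and the two $\mathrm{RHS}$-shifts must interlock precisely so that the two case contributions recombine as $t + (1 - t) = 1$, which is what makes the simple multiplicative form of the RHS stable under the recursion.
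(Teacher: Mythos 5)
Your proposal is correct and follows essentially the same route as the paper's proof: peeling off the leading transposition $(1,n-p)$ and splitting by whether it lies in $T$, using (\ref{tailb}) to factor the second sum, verifying the same three exponent identities so the two contributions recombine as $t+(1-t)=1$, and stripping position $1$ to reduce the boundary case $p=n-k$ to a smaller instance. The only difference is the cosmetic reorganization of the double induction (outer on $n$, inner on chain length, versus the paper's increasing $k$ / decreasing $p$), which does not change the substance of the argument.
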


\begin{proof} Let us denote the sum in (\ref{comp1}) by $S(w,k,p)$. We prove (\ref{comp1}) for all $n$ by increasing induction on $k$ and, for a fixed $k$, by decreasing induction on $p$. Induction starts at $k=1$ and $p=n-1$, when $S(w,k,p)$ consists of a single term, corresponding to $T=\emptyset$. Now let $a:=w(1)$ and $w_1=aw_1'$. If $p=n-k$, which means that $w_2$ is empty, then 
\[S(w,k,p)=t^{a-1}S(w_1'w_3,k-1,0)=t^{a-1}t^{\ell(w_1')+\ell(w_3)}\,,\]
by induction. But $\ell(w_1')+a-1=\ell(w_1)+N_a(w_3)$, which concludes the induction step. 

Now let us prove (\ref{comp1}) for $p<n-k$, assuming that it holds for $p+1$. As before, let $w_1=aw_1'$; also let $b:=w(n-p)$ and $w_2=w_2'b$. The case $a<b$ is straightforward, so let us assume $a>b$. The sum $S(w,k,p)$ splits into two sums, depending on $(1,n-p)\not\in T$ or $(1,n-p)\in T$. By induction, the first sum is 
\[S(w,k,p+1)=t^{\ell(w_1)+\ell(w_2w_3)+N_a(w_3)+1}\,.\]
By Lemma \ref{split}, the second sum is
\[(1-t)\,t^{N_{ba}(w_1')+N_{ba}(w_2')}S(bw_1'w_2'aw_3,k,p+1)\,.\]
By induction, 
\[S(bw_1'w_2'aw_3,k,p+1)=t^{\ell(bw_1')+\ell(w_2'aw_3)+N_b(w_3)}\,.\]
But we have
\[N_{ba}(w_1')+\ell(bw_1')=\ell(w_1)\,,\;\;N_{ba}(w_2')+\ell(w_2'aw_3)=\ell(w_2w_3)+N_{ba}(w_3)\,,\;\;N_{ba}(w_3)+N_b(w_3)=N_a(w_3).\]
Assembling the above information, we have
\[S(w,k,p)=t^{\ell(w_1)+\ell(w_2w_3)+N_a(w_3)+1}+(1-t)\,t^{\ell(w_1)+\ell(w_2w_3)+N_a(w_3)}=t^{\ell(w_1)+\ell(w_2w_3)+N_a(w_3)}\,.\]
This completes the induction step.
\end{proof}

Let us denote by $\rev(S)$ the reverse of the sequence $S$. For simplicity, we write $\Gamma^r(k)$ for $\rev(\Gamma(k))$. We also consider the segment of $\Gamma^r(k)$ with the first $p$ entries removed, which we denote by $\Gamma^r(k,p)$. Given a segment $\Gamma'$ of $\rev(\Gamma)$ (where $\Gamma$ is our fixed $\lambda$-chain), in particular $\Gamma'=\Gamma^r(k,p)$, we define ${\mathcal A}^r(\Gamma')$ like in (\ref{decch}) except that we impose an increasing chain condition and $w\in W$. 

\begin{proposition}\label{p2cols0}
Consider a permutation $w$ in $S_n$ and a number $b\in[n]$ such that $b\ge a:=w(1)$; also consider an integer $p$ with $0\le p<w^{-1}(b)-1$. Then we have
\begin{equation}\label{sum2cols0}
\stacksum{T\::\:(w,T)\in{\mathcal A}^r(\Gamma^r(1,p))}{wT(1)=b} t^{N(w,T)} (1-t)^{|T|}=t^{N_{ab}(w[2,p+1])} (1-t)^{1-\delta_{ab}}\,;
\end{equation}
here $\delta_{ab}$ is the Kronecker delta.
\end{proposition}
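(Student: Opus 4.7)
The plan is to parametrize admissible $T$ explicitly, decompose $N(w,T)$ into a piece depending only on $w$ and a piece depending on the ``column values'' used by $T$, factor the sum accordingly, and finish via a clean combinatorial identity about increasing subsequences.

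First I would unpack the admissibility condition. Since $\Gamma^r(1,p)=((1,p+2),(1,p+3),\ldots,(1,n))$, every admissible $T$ is of the form $((1,c_1),\ldots,(1,c_s))$ with $p+2\le c_1<c_2<\cdots<c_s\le n$, and the increasing Bruhat chain condition forces $a=w(1)<w(c_1)<w(c_2)<\cdots<w(c_s)$ (after each right multiplication by $(1,c_i)$, position $1$ holds the most recently swapped value). A short cycle computation shows $(wT)(1)=w(c_s)$, so $wT(1)=b$ forces either $T=\emptyset$ with $b=a$---in which case the sum equals $1$, matching $t^0(1-t)^{1-\delta_{aa}}$---or $b>a$ with $c_s=q:=w^{-1}(b)$; the hypothesis $p<q-1$ guarantees $q\ge p+2$.

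In the case $b>a$, set $v_0:=a$ and $v_i:=w(c_i)$, so $a=v_0<v_1<\cdots<v_s=b$. Since $w_i=w(1,c_1)\cdots(1,c_i)$ only permutes entries among the positions $\{1,c_1,\ldots,c_i\}\subseteq[1,c_i]$, the multiset of $w_i[1,c_i]$ agrees with that of $w[1,c_i]$, giving
\[
N(w,T)=\sum_{i=1}^{s}N_{v_{i-1},v_i}(w[1,c_i]).
\]
This counts positions $j$ with $w(j)\in(a,b)\setminus V$, where $V:=\{v_1,\ldots,v_{s-1}\}$, such that $j\le c_i$ with $v_i$ the smallest element of $V\cup\{b\}$ strictly exceeding $w(j)$. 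A case split by position then yields
\[
N(w,T)=K_1+M(V),\qquad K_1:=N_{ab}(w[2,p+1]),
\]
since positions in $[2,p+1]$ always contribute (they lie before $c_1\ge p+2$), positions beyond $q$ never contribute (all $c_i\le q$), and the contributions from positions in $\{p+2,\ldots,q-1\}$ constitute a count $M(V)$. Parametrizing $T$ by the resulting $V$---an increasing subsequence of the sequence $\widetilde w$ obtained by restricting $w$ to positions in $\{p+2,\ldots,q-1\}$ with values in $(a,b)$---and noting $|T|=|V|+1$, the left-hand side of (\ref{sum2cols0}) factors as $t^{K_1}(1-t)\cdot\Sigma$, where $\Sigma:=\sum_V t^{M(V)}(1-t)^{|V|}$.

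To complete the proof I would show $\Sigma=1$ by induction on the length of $\widetilde w$, peeling off $y_{\max}:=\max\widetilde w$. The sum over $V\not\ni y_{\max}$ equals $t$ times the analogous sum for $\widetilde w\setminus\{y_{\max}\}$: $y_{\max}$ contributes $1$ to $M$ automatically (nothing in $V$ exceeds it), and the contributions of other non-$V$ elements are unchanged by the deletion. The sum over $V\ni y_{\max}$ equals $(1-t)$ times the analogous sum for the prefix $\widetilde w_L$ of $\widetilde w$ ending just before $y_{\max}$: any such $V$ must terminate with $y_{\max}$ (the value maximum), all entries of $\widetilde w$ after $y_{\max}$ contribute $0$ (their position exceeds that of every $v\in V$), and the rest of the count restricts to $\widetilde w_L$ with $V\setminus\{y_{\max}\}$. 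Induction collapses both sums, giving $\Sigma=t+(1-t)=1$. The main obstacle will be the position-by-position accounting establishing $N(w,T)=K_1+M(V)$: one must verify that each free entry's contribution is determined solely by whether its position lies in $[2,p+1]$, in $\{p+2,\ldots,q-1\}$, or beyond $q$, and that boundary cases (entries equal to some $v_i$, or at position $1$ or $q$) contribute nothing. Once this decomposition is in place, the identity $\Sigma=1$ is a transparent telescoping.
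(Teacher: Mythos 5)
Your argument is correct, but it takes a genuinely different route from the paper's. The paper proves (\ref{sum2cols0}) by decreasing induction on $p$: it splits the sum according to whether the first available transposition $(1,p+2)$ lies in $T$, evaluates the two subsums by the inductive hypothesis (applied to $(w,p+1)$ and to $(w(1,p+2),p+1)$), and recombines them via $t^{N+1}(1-t)+t^{N}(1-t)^2=t^{N}(1-t)$. You instead make everything explicit: you classify the admissible $T$ as the chains $((1,c_1),\ldots,(1,c_s))$ with $c_s=w^{-1}(b)$ and $a<w(c_1)<\cdots<w(c_s)=b$, derive the closed formula $N(w,T)=\sum_i N_{v_{i-1}v_i}(w[1,c_i])$ using that right multiplication by the $(1,c_j)$ preserves the multiset $w[1,c_i]$, and then split this count position-by-position into the constant $N_{ab}(w[2,p+1])$ plus a statistic $M(V)$ of the increasing subsequence $V$; the residual identity $\sum_V t^{M(V)}(1-t)^{|V|}=1$ is proved by peeling off the maximal value. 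I checked the accounting (positions $1$ and $q$ and the entries of $V$ contribute nothing, positions in $[2,p+1]$ contribute unconditionally because $c_1\ge p+2$, positions beyond $q$ never contribute because $c_{i(j)}\le c_s=q$) as well as both cases of the telescoping induction, including the subtle point that for $y$ in the prefix with nothing in $V'$ above it the comparison element changes from $b$ to $y_{\max}$ but the contribution stays $1$; all of this is sound. What your approach buys is transparency---it exhibits exactly which folded walks survive and gives $N(w,T)$ a direct combinatorial meaning---at the cost of noticeably more bookkeeping; the paper's recursion is shorter but leaves the structure of the individual terms implicit. One small remark: when $a=b$ the hypothesis $0\le p<w^{-1}(b)-1$ is vacuous as literally stated, so your explicit handling of $T=\emptyset$ there is harmless, and, as in the paper, only the case $a<b$ carries content.
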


\begin{proof}
Assume that $a<b$. We use decreasing induction on $p$. The base case for $p=w^{-1}(b)-2$ is straightforward. Let us now prove the statement for $p$ assuming it for $p+1$. If $c:=w(p+2)\not\in(a,b)$, the induction step is straightforward, so assume the contrary. In this case, the sum in (\ref{sum2cols0}), denoted by $S(w,p)$, splits into two sums, depending on $(1,p+2)\not\in T$ and $(1,p+2)\in T$. By induction, the first sum is
\[S(w,p+1)=t^{N_{ab}(w[2,p+1])+1}(1-t)\,.\]
The second sum is
\begin{align*}t^{N_{ac}(w[2,p+1])}(1-t)\,S(w(1,p+2),p+1)&=t^{N_{ac}(w[2,p+1])}(1-t)\,t^{N_{cb}(w[2,p+1])}(1-t)\\&=t^{N_{ab}(w[2,p+1])}(1-t)^2\,,\end{align*}
where the first equality is obtained by induction. The desired result easily follows by adding the two sums into which $S(w,p)$ splits. 
\end{proof}

\begin{proposition}\label{twocol} Consider two sequences $C_1\le C_2$ of size $k$ and entries in $[n]$, where $\le$ denotes the componentwise order. Also consider a permutation $w$ in $S_n$ such that $w[1,k]=C_1$. Then we have
\begin{equation}\label{sum2cols}
\stacksum{T\::\:(w,T)\in{\mathcal A}^r(\Gamma^r(k))}{wT[1,k]=C_2} t^{N(w,T)} (1-t)^{|T|}=t^{\ell(C_2)-\ell(C_1)+\cinv(C_2C_1)}(1-t)^{\des(C_2C_1)}\,;
\end{equation}
here $C_2C_1$ denotes the two-column filling with left column $C_2$ and right column $C_1$.
\end{proposition}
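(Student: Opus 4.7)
The plan is to prove the identity by induction on $k$. The base case $k = 1$ reduces directly to Proposition~\ref{p2cols0} with $p = 0$: the segment $\Gamma^r(1)$ coincides with $\Gamma^r(1, 0)$, both $\ell$ and $\cinv$ vanish on the one-row two-column filling, and one recovers $(1 - t)^{1 - \delta_{ab}}$ on both sides.

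For the inductive step, I would decompose $\Gamma^r(k) = \Gamma' \cdot \Gamma^r(1, k - 1)$, where $\Gamma'$ is the concatenation of the first $k - 1$ blocks of $\Gamma^r(k)$ (transpositions $(i, j)$ with $2 \le i \le k < j \le n$), and $\Gamma^r(1, k - 1) = ((1, k+1), \ldots, (1, n))$ is the last block, involving only position $1$. Any admissible $T$ splits uniquely as $T = T' T''$ with $T'$ drawn from $\Gamma'$ and $T''$ from $\Gamma^r(1, k - 1)$. Writing $v := w T'$: since $\Gamma'$ fixes position $1$ and $\Gamma^r(1, k - 1)$ fixes positions $2, \ldots, k$, the constraint $(wT)[1, k] = C_2$ decouples into $v[2, k] = C_2[2, k] =: C_2'$ and $(v T'')(1) = C_2(1)$, while $v(1) = w(1) = C_1(1)$. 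By the additivity $N(w, T) = N(w, T') + N(v, T'')$, the left-hand side of (\ref{sum2cols}) factors as a product of an outer sum over $T'$ and an inner sum over $T''$.

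The inner sum is given by Proposition~\ref{p2cols0} applied to $v$ with $p = k - 1$: setting $a := C_1(1)$ and $b := C_2(1)$ (so $b \ge a$ from $C_1 \le C_2$), it equals $t^{N_{a, b}(C_2')}(1 - t)^{1 - \delta_{a, b}}$. For the outer sum, the key observation is that under the index shift $i \mapsto i - 1$, the chain $\Gamma'$ acting on positions $\{2, \ldots, n\}$ corresponds to $\Gamma^r(k - 1)$ in $S_{n - 1}$, preserving the Bruhat increasing chain condition. Applying the induction hypothesis with $C_1' := w[2, k]$ and $C_2' = C_2[2, k]$ (both of size $k - 1$ and satisfying $C_1' \le C_2'$), the outer sum equals $t^{\ell(C_2') - \ell(C_1') + \cinv(C_2' C_1')}(1 - t)^{\des(C_2' C_1')}$.

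Matching the resulting product against the right-hand side of (\ref{sum2cols}) reduces to two combinatorial identities comparing the fillings $C_2 C_1$ and $C_2' C_1'$ (the latter obtained from the former by deleting row $1$): the descent identity $\des(C_2 C_1) = \des(C_2' C_1') + 1 - \delta_{a, b}$, which is immediate since adjoining the row $(b, a)$ contributes one new descent iff $b > a$; and the exponent identity
\[
[\ell(C_2) - \ell(C_2')] - [\ell(C_1) - \ell(C_1')] + [\cinv(C_2 C_1) - \cinv(C_2' C_1')] = N_{a, b}(C_2'),
\]
tracking the net effect on the two column-inversion counts and on $\cinv$ of the two-column filling when the new row is prepended. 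I expect this exponent identity to be the main obstacle: one proves it by a row-by-row case analysis, weighing for each $i > 1$ the contributions of $C_1(i)$ and $C_2(i)$ to each of the four quantities against the indicator that $a < C_2(i) < b$, splitting into cases according to the relative order of $C_1(i), C_2(i), a, b$.
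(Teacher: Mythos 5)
Your proposal is correct and takes essentially the same route as the paper: the paper uses the identical decomposition of $\Gamma^r(k)$ into the $k$ row-blocks $\Gamma_i^r(k)=((i,k+1),\ldots,(i,n))$, evaluates every factor at once via Proposition \ref{p2cols0} (applied after discarding the inert positions $1,\ldots,i-1$, which is why only $C_2[i+1,k]$ appears), and then proves the resulting global identity $\sum_{i}N_{C_1(i),C_2(i)}(C_2[i+1,k])=\ell(C_2)-\ell(C_1)+\cinv(C_2C_1)$ by classifying the inversions of $C_2$ and $C_1$. Your induction on $k$ simply peels off the row-$1$ block one step at a time, and your ``exponent identity'' is exactly the one-row increment of the paper's global identity (it does hold, by the case analysis on the position of $C_2(j)$ relative to $a$ and $b$, using the attacking and row conditions to exclude equalities), so the two arguments agree after telescoping.
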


\begin{proof}
Let us split $\Gamma^r(k)$ as $\Gamma_k^r(k)\ldots\Gamma_1^r(k)$ by the rows in (\ref{omegakchain}), that is,
\[\Gamma_i^r(k)=(\,(i,k+1),\:(i,k+2),\:\ldots ,\:(i,n)\,)\,.\]
This splitting induces one for the subsequence $T$ of $\Gamma^r(k)$ indexing the sum in (\ref{sum2cols}), namely $T=T_k\ldots T_1$. For $i=0,1,\ldots,k$, define $w_i:=wT_k T_{k-1}\ldots T_{i+1}$, so $w_k=w$. The sum in (\ref{sum2cols}) can be written as a $k$-fold sum in the following way:
\begin{equation}\label{kfold}\stacksum{T_1\::\:(w_1,T_1)\in{\mathcal A}^r(\Gamma_1^r(k))}{w_0(1)=C_2(1)} t^{N(w_1,T_1)} (1-t)^{|T_1|}\;\ldots  \stacksum{T_k\::\:(w_k,T_k)\in{\mathcal A}^r(\Gamma_k^r(k))}{w_{k-1}(k)=C_2(k)} t^{N(w_k,T_k)} (1-t)^{|T_k|}\,.\end{equation}
By Proposition \ref{p2cols0}, we have
\[\stacksum{T_i\::\:(w_i,T_i)\in{\mathcal A}^r(\Gamma_i^r(k))}{w_{i-1}(i)=C_2(i)} t^{N(w_i,T_i)} (1-t)^{|T_i|}=t^{N_{C_1(i),C_2(i)}(C_2[i+1,k])} (1-t)^{1-\delta_{C_1(i),C_2(i)}}\,.\]
We can see that the above sum does not depend on the permutation $w_{i}$, but only on $C_1(i)$ and $C_2[i,k]$. Therefore, the $k$-fold sum (\ref{kfold}) is a product of $k$ factors, and evaluates to $t^e(1-t)^{\des(C_2C_1)}$, where
\[e=\sum_{i=1}^{k-1} N_{C_1(i),C_2(i)}(C_2[i+1,k])\,.\]
Clearly, $e$ is just the number of inversions $(i,j)$ in $C_2$ (meaning that $i<j$ and $C_2(i)>C_2(j)$) for which $C_1(i)<C_2(j)$. If $(i,j)$ is an inversion in $C_2$ not satisfying the previous condition, then $C_1(i)> C_2(j)$ (by the first condition in Definition \ref{deff}), and thus $(i,j)$ is an inversion in $C_1$ (by the second condition in Definition \ref{deff}). Moreover, the only inversions of $C_1$ which do not arise in this way are those counted by the statistic $\cinv(C_2C_1)$, so $e=\ell(C_2)-(\ell(C_1)-\cinv(C_2C_1))$.
\end{proof}

 We can define a filling map $f^r\::\:{\mathcal A}^r(\rev(\Gamma))\rightarrow\F(\lambda,n)$ in a similar way to Definition \ref{deffill}, namely
\[f^r(w,T):=f(wT,\rev(T))\,.\]

\begin{proposition}\label{goleft} Consider a filling $\sigma\in\F(\lambda,n)$, whose rightmost column is $C:=(\sigma(1,1),\ldots,\sigma(\lambda_1',1))$. Let $w$ be a permutation in $S_n$ satisfying $w[1,\lambda_1']=C$. We have
\begin{equation}\label{compress0}\sum_{T\::\:(w,T)\in (f^r)^{-1}(\sigma)}t^{N(w,T)}\,(1-t)^{|T|}= t^{\cinv (\sigma )-\ell(C)}\, (1-t)^{\des(\sigma)}\,.\end{equation}
\end{proposition}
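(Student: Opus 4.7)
The plan is to decompose $T$ layer by layer matching the block structure $\rev(\Gamma)=\rev(\Gamma_1)\rev(\Gamma_2)\cdots\rev(\Gamma_{\lambda_1})$, writing $T=T_1'T_2'\cdots T_{\lambda_1}'$ with $T_j'\subseteq\rev(\Gamma_j)=\rev(\Gamma(\lambda_j'))$, and to apply Proposition \ref{twocol} separately to each layer. First I would unwind $f^r(w,T)=f(wT,\rev(T))$: using the elementary fact that any sequence of involutions multiplied by its reverse equals the identity (so $T_j'\cdot\rev(T_j')=e$ in $S_n$), the blocks in $wT\cdot\rev(T_{\lambda_1}')\cdots\rev(T_{j+1}')$ cancel pairwise from the outside in, leaving the intermediate permutations
\[\pi_j\;=\;w\,T_1'\,T_2'\cdots T_j'\qquad(j=0,1,\ldots,\lambda_1),\]
with $\pi_0=w$. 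Hence $(w,T)\in(f^r)^{-1}(\sigma)$ iff $\pi_j[1,\lambda_j']=D_j$ for every $j$, where $D_j$ denotes column $j$ of $\sigma$; in particular $D_1=C$, so $T_1'$ is forced to preserve the first $\lambda_1'$ entries of $w$.

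The second step is to factor the sum over $T$ as a product over layers. By iterating Lemma \ref{split}, one has $N(w,T)=\sum_{j=1}^{\lambda_1}N(\pi_{j-1},T_j')$ and $|T|=\sum_j|T_j'|$; moreover an increasing Bruhat chain decomposes into the layerwise chains $(\pi_{j-1},T_j')\in\mathcal{A}^r(\Gamma^r(\lambda_j'))$. Since $\pi_{j-1}[1,\lambda_j']=D_{j-1}[1,\lambda_j']$ (as $\lambda_j'\le\lambda_{j-1}'$) and $D_{j-1}[1,\lambda_j']\le D_j$ componentwise (from the weakly-decreasing-rows condition on $\sigma$), Proposition \ref{twocol} applies to each layer with $k=\lambda_j'$, $C_1=D_{j-1}[1,\lambda_j']$, $C_2=D_j$, yielding
\[F_j\;:=\;\sum_{T_j'}t^{N(\pi_{j-1},T_j')}(1-t)^{|T_j'|}\;=\;t^{\ell(D_j)-\ell(D_{j-1}[1,\lambda_j'])+\cinv(D_j\,D_{j-1}[1,\lambda_j'])}(1-t)^{\des(D_j\,D_{j-1}[1,\lambda_j'])}\]
for $j\ge 2$, and $F_1=1$ (with $C_1=C_2=C$). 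Since each $F_j$ depends on $\sigma$ and not on the particular $\pi_{j-1}$, the nested sum factors cleanly as $\sum_T\cdots=F_1F_2\cdots F_{\lambda_1}$.

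Finally, matching this product with the right-hand side of \eqref{compress0} reduces to two statistic identities. The descent decomposition $\des(\sigma)=\sum_{j\ge 2}\des(D_j\,D_{j-1}[1,\lambda_j'])$ is immediate from the definitions, as both sides count cells $(i,j)$ with $j\ge 2$ and $D_j[i]>D_{j-1}[i]$. The harder identity,
\[\cinv(\sigma)-\ell(C)\;=\;\sum_{j=2}^{\lambda_1}\bigl[\cinv(D_j\,D_{j-1}[1,\lambda_j'])+\ell(D_j)-\ell(D_{j-1}[1,\lambda_j'])\bigr],\]
is the main obstacle. To prove it, I would write $\cinv(\sigma)=\sum_{j=1}^{\lambda_1}\cinv_j(\sigma)$, with $\cinv_j$ the contribution from triples $(u,v)$ in column $j$, and split $\cinv_j$ (for $j<\lambda_1$) according to whether the row index $i$ of $u$ satisfies $i\le\lambda_{j+1}'$ (so that the cell left of $u$ exists, yielding $\cinv(D_{j+1}\,D_j[1,\lambda_{j+1}'])$ after the reindex $j\mapsto j+1$) or $i>\lambda_{j+1}'$ (contributing, together with $\cinv_{\lambda_1}(\sigma)=\ell(D_{\lambda_1})$, the telescoping sum $\ell(D_j)-\ell(D_j[1,\lambda_{j+1}'])$). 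Separating off the unpaired term $\ell(D_1)=\ell(C)$ produces the claimed identity.
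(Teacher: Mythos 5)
Your proposal is correct and follows essentially the same route as the paper: decompose $T$ into layers matching the blocks $\rev(\Gamma_j)$, observe that the layerwise sums are governed by Proposition \ref{twocol} and depend only on consecutive columns of $\sigma$ (so the nested sum factors), and then assemble the exponent via the column-by-column decomposition of $\cinv$ with the telescoping of the $\ell(D_j)$ terms. The only cosmetic difference is that the paper disposes of the first layer by noting directly that $T_1$ must be empty, whereas you fold it into the $C_1=C_2$ case of Proposition \ref{twocol}; both give $F_1=1$.
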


\begin{proof}
 The splitting $\rev(\Gamma)=\Gamma_1^r\ldots \Gamma_{\lambda_1}^r$, where $\Gamma$ is our fixed $\lambda$-chain and $\Gamma_i^r:=\rev(\Gamma_i)$, induces a splitting $T=T_1\ldots T_{\lambda_1}$ of any $T$ for which $(w,T)\in{\mathcal A}^r(\rev(\Gamma))$, cf. Section \ref{specschwer}. Let $m:=\lambda_1$ be the number of columns of $\lambda$, and let $C=C_1,\ldots,C_m$ be the columns of $\sigma$, of lengths $c_1:=\lambda_1',\ldots,c_m:=\lambda_m'$. Let $C_i':=C_i[1,c_{i+1}]$, for $i=1,\ldots,m-1$. Note first that the segment $T_1$ of a sequence $T$ in (\ref{compress0}) is empty. For $i=1,\ldots,m$, define $w_i:=wT_1T_2\ldots T_{i}$, so $w_1=w$. The sum in (\ref{compress0}) can be written as an $(m-1)$-fold sum in the following way:
\begin{equation}\label{mfold}\stacksum{T_{m}\::\:(w_{m-1},T_{m})\in{\mathcal A}^r(\Gamma_{m}^r)}{w_{m}[1,c_m]=C_m} t^{N(w_{m-1},T_m)} (1-t)^{|T_m|}\;\ldots  \stacksum{T_2\::\:(w_1,T_2)\in{\mathcal A}^r(\Gamma_2^r)}{w_{2}[1,c_2]=C_2} t^{N(w_1,T_2)} (1-t)^{|T_2|}\,.\end{equation}
By Proposition \ref{twocol}, we have
\[\stacksum{T_{i}\::\:(w_{i-1},T_{i})\in{\mathcal A}^r(\Gamma_{i}^r)}{w_{i}[1,c_i]=C_i} t^{N(w_{i-1},T_i)} (1-t)^{|T_i|}=t^{\ell(C_i)-\ell(C_{i-1}')+\cinv(C_iC_{i-1}')}(1-t)^{\des(C_iC_{i-1})}\,.\]
We can see that the above sum does not depend on the permutation $w_{i-1}$, but only on $C_{i-1}$ and $C_i$. Therefore, the $(m-1)$-fold sum (\ref{mfold}) is a product of $m-1$ factors, and evaluates to $t^e(1-t)^{\des(\sigma)}$, where
\[e=\sum_{i=2}^m \ell(C_i)-\ell(C_{i-1}')+\cinv(C_iC_{i-1}')\,.\]
Since $\ell(C_m)=\cinv(C_m)$, we have
\begin{align*}e+\ell(C)&=\cinv(C_m)+\sum_{i=2}^{m} \ell(C_{i-1})-\ell(C_{i-1}')+\cinv(C_iC_{i-1}')=\\&=\cinv(C_m)+\sum_{i=2}^{m} \cinv(C_iC_{i-1})=\cinv(\sigma)\,.\end{align*}
\end{proof}

\begin{proof}[Proof of Theorem {\rm \ref{mainthm}}] At this point we assume that $\lambda$ has $n-1$ distinct non-zero parts, so $\lambda_1'=n-1$. By Lemma \ref{split}, we have 
\begin{align*}&\sum_{(w,T)\in f^{-1}(\sigma)}t^{\frac{1}{2}(\ell(w)+\ell(wT)-|T|)}\,(1-t)^{|T|}\\&=\stacksum{(w,T)\in f^{-1}(\sigma)}{T_1=\emptyset}t^{N(w,T)}(1-t)^{|T|}\sum_{T'\::\:(w',T')\in{\mathcal A}(\Gamma(\lambda_1'))}t^{\frac{1}{2}(\ell(w')+\ell(w'T')-|T'|)}\,(1-t)^{|T'|}\,,\end{align*}
where $w':=wT$. By Proposition \ref{inv2}, the second sum in the 2-fold sum above is just $t^{\ell(C)}$, where $C$ is the first column of $\sigma$. Let $w''$ be the unique permutation in $S_n$ for which $w''[1,\lambda_1']=C$. The 2-fold sum above can be rewritten as follows:
\begin{align*}t^{\ell(C)}\sum_{T''\::\:(w'',T'')\in (f^r)^{-1}(\sigma)} t^{N(w'',T'')}(1-t)^{|T''|}&=t^{\ell(C)}t^{\cinv(\sigma)-\ell(C)}(1-t)^{\des(\sigma)}\\&=t^{\cinv(\sigma)}(1-t)^{\des(\sigma)}\,.\end{align*}
The first equality is the content of Proposition \ref{goleft}.
\end{proof}

\section{Appendix: The case of partitions with repeated parts (with A. Lubovsky)}\label{al}

We now consider the case of arbitrary partitions $\lambda$ with at most $n-1$ parts (possibly with repeated parts). In this general case, we will show that Ram's version of Schwer's formula leads to a new formula for $P_\lambda(X;t)$, which is analogous to the HHL formula (\ref{hlqform}), and specializes to it when $\lambda$ has $n-1$ distinct non-zero parts. To this end, we need to consider a subset of $\F(\lambda,n)$.

\begin{definition}
Let $\Fh(\lambda,n)$ be the subset of $\F(\lambda,n)$ containing all 
fillings in $\F(\lambda,n)$ for which their rightmost column is a concatenation of increasing segments corresponding to the positions $a,a+1,\ldots,b$ with $\lambda_a=\ldots=\lambda_b$. 
\end{definition}
Observe that if $\lambda$ has $n-1$ distinct non-zero parts, then $\Fh(\lambda,n)=\F(\lambda,n)$.



\begin{theorem}
  Given any partition $\lambda$ with at most $n-1$ parts, we have
  \label{mainresult}
  \begin{equation}\label{genform}
    P_{\lambda}(X;t)
    = \sum_{\sigma\in \Fh(\lambda,n)}
    t^{\cinv (\sigma )}\,(1-t)^{\des(\sigma)}\: x^{{\rm ct}(\sigma) } \,.
  \end{equation}
\end{theorem}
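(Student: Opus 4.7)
My strategy is to extend the compression argument of Theorem \ref{mainthm} to the general case via a combinatorial bijection. When $\lambda$ has repeated parts, the filling map $f$ (with $w$ restricted to $S_n^\lambda$) is neither surjective onto $\Fh(\lambda, n)$ nor has image contained in it; the image $f(\mathcal{A}(\lambda))$ and $\Fh(\lambda, n)$ are different subsets of $\F(\lambda, n)$ that are related by a canonical form within equal-value blocks.

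I would first construct a bijection $\Psi: f(\mathcal{A}(\lambda)) \to \Fh(\lambda, n)$ by row-permutations within equal-value blocks. For $\sigma \in f(\mathcal{A}(\lambda))$, the leftmost column is automatically block-increasing (since it equals $w \in S_n^\lambda$), while the rightmost column may not be; apply the unique row-permutation within each block that sorts the rightmost column's entries on that block in increasing order. The verification that $\Psi$ lands in $\F(\lambda, n)$, is injective, and is surjective onto $\Fh(\lambda, n)$ is structural: row-wise weak decrease is preserved because rows move as intact units, and the attacking conditions survive thanks to the block-increasing constraints at both ends.

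The core of the proof is that $\Psi$ preserves the statistics $\ct$, $\des$, and $\cinv$. The first two are immediate (rows are preserved as units; descents are merely relabeled). The invariance of $\cinv$ is the main technical lemma: individual contributions may shift between columns under a row-permutation, but the total is preserved due to the joint block-increasing constraints on the leftmost column (in the preimage) and the rightmost column (in the image). Given this, the proof concludes by invoking the fiber compression formula for $\sigma' := \Psi^{-1}(\sigma) \in f(\mathcal{A}(\lambda))$,
\[
\sum_{(w, T) \in f^{-1}(\sigma')} t^{\frac{1}{2}(\ell(w) + \ell(w\phi(T)) - |T|)} (1-t)^{|T|} = t^{\cinv(\sigma')}(1-t)^{\des(\sigma')},
\]
which follows by a direct adaptation of Propositions \ref{inv2}--\ref{goleft}. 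Those propositions are already stated for arbitrary $w \in S_n$ and arbitrary rightmost-column data, so they extend without structural change; only a mild adjustment is needed to Proposition \ref{inv2} to accommodate leftmost-column heights $\lambda'_1 < n - 1$ that arise when $\lambda$ has at most $n - 1$ parts.

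I expect the main obstacle to be the $\cinv$-invariance under $\Psi$, which will require a careful cell-by-cell accounting exploiting the block-increasing constraints on both the preimage and image jointly. This is likely the combinatorial core of the bijection with A. Lubovsky referenced in the introduction.
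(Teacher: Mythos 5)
There is a genuine gap, and it sits exactly where you wave your hands: the ``fiber compression formula'' for $\sigma'\in f(\mathcal{A}(\lambda))$ does \emph{not} follow by a direct adaptation of Propositions \ref{inv2}--\ref{goleft}. When you split off $T_1$ via Lemma \ref{split}, Proposition \ref{inv2} evaluates the inner sum as $t^{\ell(w'[1,\lambda_1'])+\ell(w'[\lambda_1'+1,n])}$ with $w'=wT_{\lambda_1}\cdots T_2$. The filling $\sigma'$ only determines $w'[1,\lambda_1']$ (the rightmost column); the tail $w'[\lambda_1'+1,n]$ genuinely varies over the fiber $f^{-1}(\sigma')$ once $\lambda$ has fewer than $n-1$ distinct parts (e.g.\ for $\lambda=(2,2)$, $n=4$, a single filling receives contributions from $w'=1234$ and $w'=1243$). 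So the exponent $\ell(w'[\lambda_1'+1,n])$, as well as $N(w',\cdot)$, are not functions of $\sigma'$, and Proposition \ref{goleft} --- which sums over $T$ for one \emph{fixed} starting permutation --- cannot be invoked. In the distinct-parts case this problem is invisible because $\lambda_1'=n-1$ forces the tail to be a single entry. The paper's resolution is precisely Proposition \ref{bij} (built from Lemma \ref{lem}): a bijection on the pairs $(w,T)$ themselves, realized by a delicate rearrangement of the transposition sequences via ``start'' and ``end'' markers, which transfers the coset condition from the leftmost-column side ($wT\in S_n^\lambda$) to the rightmost-column side ($u\in S_n^\lambda$) and accounts exactly for the discrepancy through the identity $N(w,T)=N(\overline{w},V)-(\ell(w)-\ell(\overline{w}))$. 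This bijection is the combinatorial core of the appendix and does not descend to a simple row-sorting operation on fillings; note in particular that Lemma \ref{lem} only guarantees the two fillings agree up to swapping the values in rows $j,j+1$ \emph{column by column}, not by a global row swap.

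Your filling-level map $\Psi$ is also not established. Permuting whole rows within a block does not obviously preserve membership in $\F(\lambda,n)$: the attacking condition (ii) compares cells in consecutive columns with a strict row inequality, so a row swap can create new attacking pairs whose entries were never required to differ. Injectivity of $\Psi$ is likewise unclear, since the block-increasing constraint coming from $w\in S_n^\lambda$ is only visibly imprinted on the leftmost column (the top block); for lower blocks the image $f(\mathcal{A}(\lambda))$ has no evident canonical row order that the sorted rightmost column would recover. And the $\cinv$-invariance, which you correctly flag as the technical heart of your route, is left entirely open. In short, the overall shape (transfer the block-increasing condition from left to right, then compress fibers) matches the paper, but both pillars of your argument --- the fiber formula and the statistics-preserving bijection --- are unproven, and the first is asserted on grounds that demonstrably do not suffice.
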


This theorem will be proved in Section \ref{ap1}, based on the compression results in Section \ref{pfmain} and a certain combinatorial bijection discussed in more detail in Section \ref{ap2}. 

\subsection{Related remarks}

In (\ref{pq}) we recalled that $P_\lambda(X;t)$ is obtained from $Q_\lambda(X;t)$ via division by a certain polynomial in $t$. Hence, it is natural to ask whether we can group the terms in the HHL formula (\ref{hlqform}) for $Q_\lambda(X;t)$, which are indexed by $\F(\lambda,n)$, such that the sum of each group is a term in (\ref{genform}) multiplied by the mentioned polynomial in $t$; here each group would have a representative from $\Fh(\lambda,n)$. The answer to this question is negative, as the following example shows. However, it would still be interesting to derive (\ref{hlqform}) from (\ref{genform}) or viceversa via a less obvious method. 

Let $n=4$ and $\lambda= (2,2,2,0)$. The collection of all fillings with content $(2,1,2,1)$ is 
\[
  \des=2 \quad
  \left\{\begin{array}{lll}
    \tableau{ {1}& {1}\\{3}& {2}\\ {4}& {3} }& 
    \tableau{ {3}& {2}\\{1}& {1}\\ {4}& {3} }& 
    \tableau{ {3}& {2}\\{4}& {3}\\ {1}& {1} }\\ 
    \cinv=0 &\cinv=1 & \cinv=2
  \end{array}
  \right.
\]
\[
  \des=1 \quad
  \left\{ \begin{array}{llllll}
    \tableau{ {1}& {1}\\{4}& {2}\\ {3}& {3} }& 
    \tableau{ {1}& {1}\\{3}& {3}\\ {4}& {2} }& 
    \tableau{ {3}& {3}\\{1}& {1}\\ {4}& {2} }& 
    \tableau{ {4}& {2}\\{1}& {1}\\ {3}& {3} }& 
    \tableau{ {4}& {2}\\{3}& {3}\\ {1}& {1} }& 
    \tableau{ {3}& {3}\\{4}& {2}\\ {1}& {1} }\\ 
    \cinv=1 &\cinv=1 &\cinv=2 &\cinv=2 &\cinv=3 &\cinv= 3
  \end{array}
  \right. \,.
 \]
By Theorem \ref{mainresult}, the coefficient of $x^{(2,1,2,1)}$ in $P_{\lambda}(X,t)$  is a sum of two terms:
\[t^0(1-t)^2 + t^1(1-t)^1 = t^0(1-t)^1\,.\]
By (\ref{hlqform}), the coefficient of $x^{(2,1,2,1)}$ in $Q_{\lambda}(X,t)/(1-t)^{\ell(\lambda)}$ is a sum of nine terms:
\[(1-t)^2(t^0+t^1+t^2) + (1-t)^1(2t^1 + 2 t^2 + 2t^3)=t^0(1-t)^1 [3]_t! \,. \]

\subsection{The proof of Theorem {\rm \ref{mainresult}}}\label{ap1}

The new ingredient for this proof is the following bijection, to be discussed in detail in Section \ref{ap2}. The notation is the same as in the previous sections. Given $w$ in $S_n$, we denote by $\overline{w}$ the lowest coset representative in $wS_n^\lambda$.

\begin{proposition}
  \label{bij}
There is a bijection $\phi:\A^L \to \A^R$ where
  \[ 
\A^L:=  \left\{ (w,T)\in \mathcal{A}^r(\rev(\Gamma))\::\: T_1 = \emptyset,\, wT \in S_n^{\lambda} \right\}\,,\;\;
\A^R:=  \left\{ (u,V) \in \mathcal{A}^r(\rev(\Gamma))\::\: V_1=\emptyset,\, u\in S_n^{\lambda} \right\}\,.
  \]
 Furthermore, if  $(w,T) \stackrel{\phi}{\mapsto} (u,V)$, then we have $u=\overline{w}$, $|V|=|T|$, and
  \[
  N(w,T)=N(\overline{w},V) - (\ell(w) - \ell(\overline{w}))\,, \qquad  \mathrm{ct}(w,T) = \mathrm{ct}(\overline{w},V)\,.
  \]
\end{proposition}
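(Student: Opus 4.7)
Given $(w,T) \in \A^L$, write $w = \overline{w}\,v$ with $v := \overline{w}^{-1}w \in W_\lambda$, so that $\ell(w) = \ell(\overline{w}) + \ell(v)$ and, since $wT \in S_n^\lambda$, also $\ell(wT\,v^{-1}) = \ell(wT) + \ell(v)$. The reverse-Bruhat chain $w = \pi_0 < \pi_1 < \cdots < \pi_s = wT$ encoded by $T$ transports under right-multiplication by $v^{-1}$ to a new chain $\overline{w} = \pi_0 v^{-1} < \pi_1 v^{-1} < \cdots < \pi_s v^{-1} = wT\,v^{-1}$. I will define $\phi(w,T) := (\overline{w}, V)$, where $V$ is the sequence of labels $v s_{\beta_i} v^{-1} = s_{v(\beta_i)}$ of the shifted chain (with $s_{\beta_i}$ the $i$-th reflection of $T$). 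The inverse map sends $(u,V) \in \A^R$ to $(w,T) \in \A^L$ by factoring $uV = u' v'$ with $u' \in S_n^\lambda$ and $v' \in W_\lambda$, setting $w := u(v')^{-1}$, and shifting the chain back by $v'$.

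To show $\phi$ lands in $\A^R$, I first verify that the shifted chain is reverse-Bruhat increasing: the $i$-th step is length-increasing iff $(\pi_{i-1} v^{-1})(v(\beta_i)) = \pi_{i-1}(\beta_i) > 0$, which is precisely the condition for the original $i$-th step. Second, because $v \in W_\lambda$ preserves each level set $\{1,\ldots,\lambda_j'\}$ and its complement, the conjugate $s_{v(a),v(b)}$ of any transposition in $\Gamma_j^r$ again lies in $\Gamma_j^r$; hence the shifted labels distribute into segments $V_j \subseteq \Gamma_j^r$, and $V_1 = \emptyset$ since $T_1 = \emptyset$. The delicate point is to verify that within each $V_j$ the shifted labels, kept in the order inherited from $T_j$, still satisfy the order condition on subsequences of $\Gamma_j^r$ analogous to Remark~\ref{crit}. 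If, for instance, two consecutive labels $s_{a,b_1}, s_{a,b_2}$ of $T_j$ with $b_1 < b_2$ were to flip after conjugation (so $v(b_1) > v(b_2)$), then combining the chain-step requirement $\pi(b_1) < \pi(b_2)$ at the relevant intermediate permutation $\pi$ with the fact that $b_1, b_2$ lie in a common $W_\lambda$-block and $\overline{w} \in S_n^\lambda$ forces a contradiction with the original increasing hypothesis; an analogous case analysis handles consecutive pairs with distinct first entries.

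Once well-definedness is established, the four numerical identities follow directly. By construction $u = \overline{w}$, $|V| = |T|$, and $\overline{w} V = wT\,v^{-1}$, so $\ell(\overline{w} V) = \ell(wT) + \ell(v) = \ell(wT) + \ell(w) - \ell(\overline{w})$. The increasing-chain counterpart of equation~(\ref{tailb}) gives $2N(w,T) = \ell(wT) - \ell(w) - |T|$ and $2N(\overline{w},V) = \ell(\overline{w} V) - \ell(\overline{w}) - |V|$; subtracting yields $N(\overline{w}, V) - N(w, T) = \ell(w) - \ell(\overline{w})$. For the content equality, note that $\overline{w} V_1 \cdots V_j = wT_1 \cdots T_j\, v^{-1}$; since $v^{-1} \in W_\lambda$ permutes $\{1,\ldots,\lambda_j'\}$ internally, the multiset of first $\lambda_j'$ entries is preserved column by column, and summing (or invoking Proposition~\ref{weightmon}) gives $\ct(w,T) = \ct(\overline{w}, V)$.

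The main obstacle is the ordering verification in the second paragraph: conjugation by a general $v \in W_\lambda$ can a priori reshuffle transpositions within a segment, and the proof must exploit the interplay of the chain-increase condition with $\overline{w} \in S_n^\lambda$ to rule out every such reshuffling. I expect a local exchange argument on two consecutive labels---translating each would-be order reversal into a length-decreasing step of the original chain via explicit inspection of $w$'s entries at the affected positions---to close this gap.
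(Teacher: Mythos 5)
Your reduction of the numerical identities to the relations $uV=wT\,v^{-1}$ and $|V|=|T|$ is fine, and so is the observation that right-translating the chain by $v^{-1}$ preserves the Bruhat-increasing property (since $(\pi_{i-1}v^{-1})(v(\beta_i))=\pi_{i-1}(\beta_i)$). The genuine gap is exactly the point you flag as ``delicate'': the conjugated label sequence $(v(\beta_1),\ldots,v(\beta_s))$ need not be a subsequence of $\rev(\Gamma)$ in the prescribed order, and the contradiction you hope to extract from the chain conditions does not exist. Concretely, take $n=4$, $\lambda=(2,2,1,0)$, so that $\rev(\Gamma)=\Gamma_1^r\Gamma_2^r$ with $\Gamma_2^r=((2,3),(2,4),(1,3),(1,4))$ and $W_\lambda=\langle s_1\rangle$. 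Let $w=2143$ and $T=T_1T_2$ with $T_1=\emptyset$ and $T_2=((2,3),(1,4))$. The chain $2143<2413<3412$ is increasing, $wT=3412\in S_4^\lambda$, so $(w,T)\in\A^L$, while $\overline{w}=1243$ and $v=(1,2)$. Conjugation by $v$ sends $T_2$ to $((1,3),(2,4))$, which is \emph{not} a subsequence of $\Gamma_2^r$ in the required order, even though every step of both chains is length-increasing; your proposed local exchange argument (for pairs with distinct first entries, the relevant case here) therefore cannot close the gap, because there is nothing contradictory to exploit.

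This is precisely why the paper does not use global conjugation. Its proof factors the sorting $w\to\overline{w}$ into adjacent transpositions $(j,j+1)$ inside the $W_\lambda$-blocks and, for each one, applies the bijection of Lemma \ref{lem}, which is a genuine surgery on the sequence rather than a conjugation: one scans $T'$ for ``start markers'' $((m,j+1),(m,j))$ and their ``complementary end markers'' $(k,j)$ (possibly lying in a different segment $T_i$), and replaces each configuration $s_mWe_k$ by $e_m^{(j,j+1)}Ws_k$ via the identities (\ref{rel}), leaving the intervening factor $W$ unconjugated. The resulting $V$ consists of different transpositions from $vTv^{-1}$ — not merely a reordering of its conjugates — so the defect in your construction cannot be repaired by permuting the conjugated labels; some such marker surgery is unavoidable. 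The content and $N$-statistics computations you give would survive in that framework, but the heart of the proposition is the well-definedness you leave open.
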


\begin{proof}[Proof of Theorem {\rm \ref{mainresult}}]
We have 
\begin{align*}
  P_\lambda(X;t)&=  \sum_{(w',T')\in \mathcal{A}(\Gamma) } 
  t^{\frac{1}{2} (\ell(w') + \ell(w'T') -|T'|) }\, (1-t)^{|T'|}\, 
  x^{\ct(w',T')} \\
  &=   \sum_{\substack{(w',T') \in \mathcal{A}(\Gamma) \\ T'_1=\emptyset}} t^{N(w',T')}\,(1-t)^{|T'|}\,
  x^{\ct(w',T')}
\sum_{T'':(w,T'') \in \mathcal{A}(\Gamma(\lambda'_1)) } t^{\frac{1}{2}(\ell(w) + \ell(wT'') -|T''|)} \,(1-t)^{|T''|}\\
&=\sum_{\substack{(w,T) \in \mathcal{A}^r(\Gamma^r)\\ T_1=\emptyset, \,wT\in S_n^\lambda}} t^{N(w,T)}\,(1-t)^{|T|}\,
  t^{\ell(w[1,\lambda'_1]) +\ell(w[\lambda'_1+1,n] )} \,
  x^{\ct(w,T)} \,.
\end{align*}
Here the first equality is Schwer's formula (\ref{hlpform}), the second one is based on Lemma \ref{split}, where $w:=w'T'$, and the third one is based on Proposition \ref{inv2}, where $T=\mathrm{rev}(T')$.

Let us now apply the bijection $(w,T) \stackrel{\phi}{\mapsto} (\overline{w},V)$ in Proposition \ref{bij}. By a property of this bijection, we have
\begin{align*}N(w,T)+\ell(w[1,\lambda'_1]) +\ell(w[\lambda'_1+1,n])&=N(\overline{w},V) - (\ell(w) - \ell(\overline{w}))+\ell(w[1,\lambda'_1]) +\ell(w[\lambda'_1+1,n])\\&=N(\overline{w},V)+\ell(\overline{w}[1,\lambda'_1])\,.\end{align*}
The last equality follows from the fact that
\[\ell(w) - \ell(w[1,\lambda'_1]) -\ell(w[\lambda'_1+1,n]) 
  =  \ell(\overline{w}) - \ell(\overline{w}[1,\lambda'_1]) -\ell(\overline{w}[\lambda'_1+1,n])\,, \]
where $\ell(\overline{w}[\lambda'_1+1,n])=0$. Hence, we can rewrite once again $P_\lambda(X;t)$ as
\begin{eqnarray*}
  P_\lambda(X;t)=
  \sum_{\substack{(u,V) \in \mathcal{A}^r(\Gamma^r)\\ V_1=\emptyset,\, u \in S_n^\lambda}} 
  t^{N(u,V) + \ell(u[1,\lambda'_1])}\,(1-t)^{|V|}\,
  x^{\ct(u,V)} \,;
\end{eqnarray*}
here we also used the other properties of the bijection in Proposition \ref{bij}.

 We now break up the sum in the right-hand side into smaller sums over the pairs $(u,V)$ whose image under $f^r$ is the same filling $\sigma$; here $u$ is the unique permutation in $S_n^\lambda$ for which $u[1,\lambda_1']$ coincides with the rightmost column of $\sigma$. Based on Proposition \ref{goleft}, we obtain
  \begin{align*}
P_\lambda(X;t)&=    \sum_{\sigma \in \Fh(\lambda,n)} 
\left(
t^{\ell(u[1,\lambda'_1])} 
\sum_{\substack{V:(u,V) \in (f^r)^{-1}(\sigma)} } 
    t^{N(u,V)} \,(1-t)^{|V|} 
    \right) x^{\ct(\sigma)} \\
    &=   
    \sum_{\sigma \in \Fh(\lambda,n)}  
    t^{\ell(u[1,\lambda'_1])}\, t^{\mathrm{cinv}(\sigma)-\ell(u[1,\lambda'_1])}\,
    (1-t)^{\mathrm{des}(\sigma)}\,x^{\ct(\sigma)}\\
&= \sum_{\sigma \in \Fh(\lambda,n)}  t^{\mathrm{cinv}(\sigma)}\,
    (1-t)^{\mathrm{des}(\sigma)}\,
     x^{\ct(\sigma)} \,.
  \end{align*}
\end{proof}

\subsection{The bijection in Proposition {\rm \ref{bij}}}\label{ap2}

We now get back to Proposition \ref{bij}, whose proof is based on Lemma \ref{lem} below. We start by preparing the background needed for the mentioned lemma.

Given a cycle $a=(a_1, \ldots, a_n)$ and a permutation $b$, we let $a^b:= bab^{-1}=(b(a_1),\ldots, b(a_n))$. The following simple identities will be needed below:
  \begin{align}
    &(j,j+1)(m,j+1)(m,j) = (m,j+1) \label{rel}\\
    &(j,j+1)(m,j)(m,j+1)=(m,j) \nonumber\\
    &(m,j)(j,j+1)=(m,j+1)(m,j)  \nonumber \\
    &(m,j+1)(j,j+1) = (m,j)(m,j+1)  \nonumber\,.
  \end{align}

In the sequel we will use both sequences of transpositions and the compositions of these transpositions (as permutations); the distinction will be made based on the context.
Recall the sequence of transpositions $\Gamma^r(k)$, which was interpreted in Section \ref{specschwer} as a reversed $\omega_k$-chain in the root system $A_{n-1}$:
\begin{eqnarray*}
 ( \, (k,k+1),& (k,k+2), \dots&, (k,n), \nonumber \\
   (k-1,k+1),& (k-1,k+2),  \dots &, (k-1,n),\nonumber\\
               & \vdots & \nonumber \\
   (1,k+1),& (1,k+2),  \dots & , (1,n) \, )\,.\nonumber
\end{eqnarray*}
It turns out that the following sequence $\widetilde{\Gamma}^r(k)$ is also a reversed $\omega_k$-chain:
\begin{eqnarray*}
 ( \, (k,k+1),& (k-1,k+1), \dots&, (1,k+1), \nonumber \\
   (k,k+2),& (k-1,k+2),  \dots &, (1,k+2),\nonumber\\
               & \vdots & \nonumber \\
   (k,n),& (k-1,n),  \dots & , (1,n) \, )\,.\nonumber
\end{eqnarray*}
Indeed, we can get from $\Gamma^r(k)$ to $\widetilde{\Gamma}^r(k)$ by swapping pairs corresponding to commuting transpositions (see \cite{lapawg}).
We will use the convention of writing $(b,a)$ for transpositions $(a,b)$ in $\widetilde{\Gamma}^r(k)$. 
Recall that we have the reversed $\lambda$-chain $\mathrm{rev}(\Gamma)= \Gamma^r_1\dots\Gamma^r_{\lambda_1}$, where $\Gamma_i^r:=\Gamma^r(\lambda'_i)$. 
For $1 \leq j \leq \lambda'_1$
  we will consider the new reversed $\lambda$-chain 
  $\mathrm{rev}(\Gamma)_j:= \widetilde{\Gamma}^r_1\dots\widetilde{\Gamma}^r_{\lambda_j}\Gamma_{\lambda_j+1}^r\dots \Gamma_{\lambda_1}^r$ where
  $\widetilde{\Gamma}^r_i:=\widetilde{\Gamma}^r(\lambda'_i)$.
  The relationship between $\Gamma^r(k)$ and $\widetilde{\Gamma}^r(k)$ gives a relationship between $\rev(\Gamma)$
  and $\rev(\Gamma)_j$, which leads to an obvious bijection between the corresponding admissible sets. Therefore in the following
  proof we use $\rev(\Gamma)_j$, but based on the mentioned bijection the results are easily translated to $\rev(\Gamma)$.
  The splitting of $\mathrm{rev}(\Gamma)_j$ induces the splitting $T=T_1\dots T_{\lambda_1}$ of any $T$ for which 
  $(w,T) \in \mathcal{A}^r(\mathrm{rev}(\Gamma)_j)$.

\begin{lemma}
  \label{lem}
  If $\lambda_j=\lambda_{j+1}$ for a fixed $j$, then there is a 
  bijection $\phi_j:\A_j^L \to \A_j^R$,
where
  \[\A_j^L:=\{(w,T)\in \A^r(\rev(\Gamma)_j) \::\:
  w(j)> w(j+1),\, wT(j) < wT(j+1),\, T_1=\emptyset\}
  \]
and
\[\A_j^R := \{(w',T')\in \A^r(\rev(\Gamma)_j) \::\:
  w'(j)< w'(j+1),\, w'T'(j) > w'T'(j+1) ,\, T'_1=\emptyset\}\,.\]
  If $(w,T) \mapsto (w',T')$ under this bijection, then  $w'=w(j,j+1)$, $|T|=|T'|$, $\ct(w,T)=\ct(w',T')$, and
  \[
 N(w,T) = N(w',T')   -1 \,.
  \]
\end{lemma}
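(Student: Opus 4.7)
My approach would be to define $\phi_j$ by setting $w' := w(j, j+1)$ and constructing $T'$ as a subsequence of $\rev(\Gamma)_j$ satisfying three properties: the segmentwise product identity $T_i' = s T_i s$ for $i = 1, \ldots, \lambda_1$ (where $s := (j, j+1)$ and $T = T_1 \cdots T_{\lambda_1}$ is the segment decomposition in the statement of the lemma), membership in $\A^r(\rev(\Gamma)_j)$, and $T_1' = \emptyset$. The product identity immediately gives $w'T' = wT(j, j+1)$, from which the defining inequalities of $\A_j^R$ follow: $w'(j) = w(j+1) < w(j) = w'(j+1)$, and $w'T'(j) = wT(j+1) > wT(j) = w'T'(j+1)$. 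The key structural tool is that within each segment $\widetilde{\Gamma}^r_i$ with $\lambda_i' \geq j+1$ (which holds for all $i \leq \lambda_j$), the partner transpositions $(j+1, c)$ and $(j, c)$ occupy consecutive positions with $(j+1, c)$ first, so that $s$-conjugation within a segment can be realized as a chain-order subsequence using the commutation relations in (\ref{rel}).

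The construction of $T_i'$ is governed by case analysis on how the partner transpositions appear in each column of $T_i$. For a partner pair contributing a single element to $T_i$, the bijection swaps the chain position of that element by one slot (e.g., $(j+1,c)$ at position $p$ becomes $(j,c)$ at position $p+1$ in $T_i'$, or vice versa). For a partner pair contributing both or neither, and for interactions with occurrences of $(j,j+1)$ in later segments $\Gamma^r_i$ (possible only when $i > \lambda_j$ and $\lambda_i' = j$), the realization of $sT_is$ as a chain-order subsequence may require redistributing contributions among columns within the segment, justified by combining multiple relations from (\ref{rel}); for instance, in the ``both in $T_i$'' case the conjugate product $(j,c)(j+1,c)$ is not readable in chain order, and one instead uses $(j,c)(j+1,c) = (j,j+1)(j,c)$ to absorb the discrepancy against an adjacent column or a later $(j,j+1)$ factor. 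The main technical step, and the principal obstacle of the proof, is verifying that the resulting $(w', T')$ satisfies the increasing Bruhat chain condition: this requires a cumulative check that every intermediate product along the chain is a valid Bruhat cover, with the four relations in (\ref{rel}) underwriting the local steps.

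Granting the bijection, the remaining claims follow readily. The cardinality $|T'| = |T|$ is inherent to the segmentwise product identity (since $s$-conjugation preserves the number of transposition factors), and bijectivity follows from the involutive nature of $s$-conjugation. For the content identity $\ct(w, T) = \ct(w', T')$, using $T_i' = s T_i s$ and denoting $\pi_{j'} := w T_1 \cdots T_{j'}$ and $\pi_{j'}' := w' T_1' \cdots T_{j'}'$, we compute
\[
\pi_{j'}' = ws \cdot s T_1 s \cdots s T_{j'} s = w T_1 \cdots T_{j'} \cdot s = \pi_{j'}(j, j+1),
\]
so the filling $f^r(w', T')$ is obtained from $f^r(w, T)$ by swapping rows $j$ and $j+1$, which have equal length under $\lambda_j = \lambda_{j+1}$, thus preserving the content. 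For the $N$-count, the identity
\[
N(w, T) = \tfrac12 \bigl( \ell(wT) - \ell(w) - |T| \bigr),
\]
valid for pairs in $\A^r$ by summing the standard length identity $\ell(u(a, b)) - \ell(u) = 2 N_{cd}(u[a+1, b-1]) + 1$ over the steps of the Bruhat chain, combines with $\ell(w') = \ell(w) - 1$ (removed inversion at $(j, j+1)$) and $\ell(w'T') = \ell(wT) + 1$ (added inversion at $(j, j+1)$) to give $N(w', T') - N(w, T) = 1$, as required.
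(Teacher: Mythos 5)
Your derivation of the consequences is sound: $|T|=|T'|$, the content identity, and the count $N(w,T)=N(w',T')-1$ obtained from the increasing-chain length formula $N(u,S)=\tfrac12(\ell(uS)-\ell(u)-|S|)$ together with $\ell(w')=\ell(w)-1$ and $\ell(w'T')=\ell(wT)+1$ is exactly how the paper concludes via (\ref{tailb}). The gap is in the construction of the bijection itself. Your recipe requires each $T_i'$ to be a chain-order subsequence of the $i$-th segment of $\rev(\Gamma)_j$ with product $sT_is$, where $s=(j,j+1)$, and this is not achievable in general; moreover both escape routes you mention fail. There is never a ``later $(j,j+1)$ factor'' to absorb against, because $\lambda_j=\lambda_{j+1}$ forces $\lambda_i'\ne j$ for every column $i$, so the transposition $(j,j+1)$ does not occur anywhere in $\rev(\Gamma)_j$; and redistribution within the segment is blocked when the remaining transpositions of that segment all involve letters foreign to the product $sT_is$. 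Concretely, take $n=4$, $\lambda=(3,3)$, $j=1$, so every segment of $\rev(\Gamma)_1$ equals $\widetilde{\Gamma}^r(2)=((2,3),(1,3),(2,4),(1,4))$. The pair $w=2134$, $T=(\,\emptyset\mid(2,3)\mid(2,4),(1,4)\,)$ lies in $\A_1^L$: the chain $2134<2314<2413<3412$ is increasing, $w(1)>w(2)$, and $wT(1)=3<4=wT(2)$. Here $sT_3s$ is the $3$-cycle $1\mapsto4\mapsto2\mapsto1$, and no chain-order subsequence of $((2,3),(1,3),(2,4),(1,4))$ has this product: odd-length subsequences have the wrong sign, the only even-length subsequence not moving the letter $3$ is $((2,4),(1,4))$ itself, whose product is the inverse $3$-cycle, and the full four-element subsequence moves $3$. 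So your map is undefined at an admissible pair that genuinely occurs (under the paper's bijection it corresponds to $(w',T')=(1234,(\,\emptyset\mid(2,3),(1,3)\mid(1,4)\,))\in\A_1^R$).

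The missing idea is that the conjugation discrepancy must be allowed to travel across segment boundaries. The paper scans $T'$ for start markers $s_m=((m,j+1),(m,j))$ and pairs each with a complementary end marker $e_k$, which may lie in a \emph{later} segment; it then replaces $s_m$ by the single transposition $(j,j+1)s_m$, copies the intervening block $W$ verbatim (not conjugated), and replaces $e_k$ by the two-element marker $e_k(j,j+1)$, using the relations (\ref{rel}). Consequently the individual sizes $|T_i|$ are not preserved (only $|T|$ is), the segmentwise products are not conjugate, and the partial products of the two chains agree exactly --- rather than differing by a right factor of $s$ --- on the stretch between a marker and its complement; in the example above $|T_2|=1$ while $|T_2'|=2$. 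Your content argument must be weakened accordingly: each column of $f^r(w',T')$ equals the corresponding column of $f^r(w,T)$ either exactly or with rows $j$ and $j+1$ interchanged, which still preserves content because $\lambda_j=\lambda_{j+1}$, and that is what the paper asserts.
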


\begin{proof}
  Define the map from $\A_j^R$ to $\A_j^L$ by 
  \begin{eqnarray*}
    (w',T') \mapsto (w:=w'(j,j+1),T) 
  \end{eqnarray*}
  where $T$ is described in the sequel. 

  Consider $(w',T')\in \A_j^R$, and recall the splitting $T'=T_1\dots T_{\lambda_1}$, which is induced by the splitting of 
  $\mathrm{rev}(\Gamma)_j:= \widetilde{\Gamma}^r_1\dots\widetilde{\Gamma}^r_{\lambda_j}\Gamma_{\lambda_j+1}^r\dots \Gamma_{\lambda_1}^r$. If 
  $T'= \left( (a_1,b_1), \dots , (a_p,b_p) \right)$, we define  
  $(T')^{(j,j+1)}:= \left( (a_1,b_1)^{(j,j+1)}, \dots, (a_p,b_p)^{(j,j+1)} \right)$.
  If for all $i$, the sequence $T_i$ does not contain the 
  transpositions $(m,j)$ and $(m,j+1)$ adjacent to one another for some $m$, then define
  $T:= (T')^{(j,j+1)}$. We have $wT=w'T'(j,j+1)$, hence $(w,T)\in \A_j^L$, and it is clear that $|T| = |T'|$.
  The filling $f^r(w,T)$ is obtained from the filling $f^r(w',T')$ by switching row $j$ with row $j+1$, hence
  $\ct(w,T)=\ct(w',T')$.

  To facilitate with the remainder of the proof we  introduce some terminology.
  In order to construct $T$, we define a new splitting of $T'$ (different from $T'=T_1\dots T_{\lambda_1}$). 
 We scan $T'$ from left to right, one  $T_i$ at a time, paying attention to certain transpositions.
  We define a \emph{start marker} as a sequence of adjacent transpositions 
 $((m,j+1),(m,j))$ in $T_i$ for $i\le \lambda_j$, and denote it by $s_m$. Similarly, we define an 
 \emph{end marker} simply as a transposition $(m,j)$ in $T_i$ for $i\le \lambda_j$, 
 and denote it by $e_m$.
 If $i> \lambda_j$, a  {start marker}  is a sequence
 $((m,j),(m,j+1))$ in $T_i$, also denoted by $s_m$, and an 
 {end marker}  is a transposition $(m,j+1)$ in $T_i$, 
 also denoted by $e_m$.

 If $T'$ contains a {start marker} $s_m$ 
 and we split $T'$ as $T'=Us_mV$, then
 $w'U(j)<w'U(j+1)$, because otherwise $T'$ is not $w'$-admissible. By our hypothesis $w'T'(j)>w'T'(j+1)$, hence there is a
 transposition $\tau$ in $V$ determining the splitting
 $T=Us_mW\tau R$,
 where $w'Us_mW\tau(j)>w'Us_mW\tau(j+1)$. Consider the first such $\tau$ in $V$, which means that $\tau$ is an end marker $e_k$ and $w'Us_mW(j)<w'Us_mW(j+1)$. 
 We call this $e_k$ the \emph{complementary end marker} to $s_m$. 
 Observe that a {complementary end marker} can be in $T_i$, $i>\lambda_j$ for a start marker in $T_{i'}$, $i' \leq \lambda_j$.

  Consider the following splitting of $T'$: 
  \begin{equation}
    T'= U_1 s_{m_1} W_1 e_{k_1} U_2 \dots U_{t-1}  s_{m_t} W_t e_{k_t} U_{t+1}\,,
    \label{phiTsplit}
  \end{equation}
where, in our scanning of $T'$, $s_{m_1}$ is the first {start maker}, $e_{k_1}$ is its {complementary end marker}, $s_{m_2}$ is the next start marker, etc., and $U_{t+1}$ contains no start markers. 
Observe that
$(j,j+1)s_{m_i}We_{k_i}(j,j+1)= e^{(j,j+1)}_{m_i}Ws_{k_i}$, by (\ref{rel}). Based on this, we define the sequence of transpositions
\begin{equation}
  T:= U_1^{(j,j+1)} e^{(j,j+1)}_{m_1} W_1 s_{k_1} U_2^{(j,j+1)} \dots U_{t-1}^{(j,j+1)}  
  e^{(j,j+1)}_{m_t} W_t s_{k_t} U_{t+1}^{(j,j+1)}\,,
  \label{phiTsplit2}
\end{equation}
in order to have $(j,j+1)T'(j,j+1)=T$ as a composition of transpositions. It is not hard to check that $T$ is $w$-admissible, where $w=w'(j,j+1)$. 
Therefore $wT=w'T'(j,j+1)$ and thus $(w,T)\in \A_j^L$. It is also clear that $|T|=|T'|$.
Furthermore, we have
$\ct(w,T)=\ct(w',T')$, since, for each column, the only difference that can occur between the 
fillings $f^r(w,T)$ and $f^r(w',T')$ is due to interchanging the values in rows  $j$ and $j+1$.  
The fact that $N(w,T) = N(w',T')-1$ follows from (\ref{tailb}).

To show that the constructed map is invertible, we apply a similar procedure to $T$, by scanning it from right to left; here the notion of an end marker needs to be slightly modified, in the obvious way. It is not hard to see that this procedure determines a splitting of $T$ at the same positions as those in the previous splitting of $T'$, so we recover the original sequence $T'$.
\end{proof}


\begin{proof}[Proof of Proposition {\rm \ref{bij}}] 
  \label{bijproof}
Given $(w,T)\in \A^L$, we can biject it to $(\overline{w},T')\in\A^R$ by applying some procedure to get from $w$ to $\overline{w}$ in $S_n^\lambda$ by swapping adjacent positions $j$ and $j+1$; then for each such swap, we apply the bijection $\phi_j$ in Lemma \ref{lem}. The reorderings have to occur in the intervals $[a,b]$ for which $\lambda_a=\ldots=\lambda_b$. For instance, the mentioned intervals can be considered from top to bottom, and at each step, for a certain interval $[a,b]$, we can define $j$ to be the first descent in $[a,b)$ of the corresponding permutation $u$; in other words, we set
\begin{equation}\label{rule1}j:=\min\,\{i\in[a,b)\::\:u(i)>u(i+1)\}\,.\end{equation}

In order to apply the same sequence of transformations in reverse order to $(\overline{w},T')$, we need to use the similar procedure described below. Start with the permutation $\overline{w}T'$ and convert it to a permutation in $S_n^\lambda$ as follows, by considering the intervals $[a,b]$ from bottom to top; then for each swap of adjacent positions $j$ and $j+1$, apply the bijection $\phi_j^{-1}$. If the current interval is $[a,b]$ and the current permutation is $u$, then $j$ is defined as follows:
\begin{equation}\label{rule2}j:=u^{-1}(k)\,,\;\;\;\;\mbox{where }\:k:=\max\,\{u(i)\::\:i\in[a,b),\:u(i)>u(i+1)\}\,.\end{equation}

The fact that the second procedure reverses the first one rests on the following simple observation. Fix $\pi$ in $S_n$ and sort its entries by swapping adjacent entries, i.e., by applying adjacent transpositions $s_j=(j,j+1)$ on the right. Assume that $j$ is chosen as in (\ref{rule1}), where $[a,b)=[1,n)$. The mentioned sorting of $\pi$ is realized by the unique reduced word for $\pi^{-1}$ of the form 
\[\ldots (s_is_{i-1}\ldots)(s_{i+1}s_{i}\ldots)\ldots(s_{n-1}s_{n-2}\ldots)\,.\]
Now consider the sorting of $\pi^{-1}$ given by the reversed reduced word (for $\pi$)
\[(\ldots s_{n-2}s_{n-1})\ldots(\ldots s_is_{i+1})(\ldots s_{i-1}s_i)\ldots\;.\]
It is not hard to see that this sorting is realized by successively applying $s_j$ with $j$ given by (\ref{rule2}), where $[a,b)=[1,n)$.
\end{proof}

\begin{remark}
It is clear that the bijections $\phi_j$ in Lemma \ref{lem} satisfy $\phi_i\phi_j = \phi_j\phi_i$ for $j\ge i+2$. If one can show that they also satisfy the braid relations
$\phi_j\phi_{j+1}\phi_j = \phi_{j+1}\phi_j\phi_{j+1}$, whenever these compositions are defined, then the bijection in the proof of Proposition \ref{bij} does not depend
on the order in which we apply the maps $\phi_j$, to go from $(w,T)$ to $(\overline{w},T')$.
\end{remark}


\end{document}